\providecommand{\algorithmname}{Algorithm}
\newcommand\eqref[1]{(\ref{#1})}
\begin{document}

\title{{\large Characterization of worst-case GMRES}}

\author{Vance Faber\footnotemark[1], J\"org Liesen\footnotemark[2] and Petr Tich\'y\footnotemark[3]}

\footnotetext[1] {Vanco Research, Big Pine Key, FL 33043 ({\tt vance.faber@gmail.com}).}

\footnotetext[2]{Institute of Mathematics, Technical University of Berlin,
Stra{\ss}e des 17. Juni 136, 10623 Berlin, Germany ({\tt liesen@math.tu-berlin.de}).
The work of this author was supported by the Heisenberg Program
of the Deutsche Forschungsgemeinschaft (DFG).}

\footnotetext[3]{Institute of Computer Science, Academy of Sciences of
the Czech Republic, Pod Vod\'arenskou v\v{e}\v{z}\'{\i} 2, 18207 Prague,
Czech Republic ({\tt tichy@cs.cas.cz}).
This work was supported by the Grant Agency of the Czech Republic under grant No. P201/13-06684~S,
and by the project M100301201 of the institutional support of the
Academy of Sciences of the Czech Republic.}

\maketitle
\begin{abstract}
Given a matrix $A$ and iteration step $k$,
we study a best possible attainable upper
bound on the GMRES residual norm that does not depend on
the initial vector $b$. This quantity is called
the worst-case GMRES approximation.
We show that the worst case
behavior of GMRES for the matrices $A$ and $A^T$ is the same, and
we analyze properties of initial vectors for which the worst-case
residual norm is attained. In particular, we show that such vectors
satisfy a certain ``cross equality'', and we characterize them as
right singular vectors of the corresponding GMRES residual matrix.
We show that the worst-case GMRES polynomial may not be uniquely determined,
and we consider the relation between the worst-case and the ideal GMRES
approximations, giving new examples in which the inequality between
the two quantities is sharp at all iteration steps $k\geq 3$.
Finally, we give a complete characterization of how the values
of the approximation problems in the context of worst-case
and ideal GMRES for a real matrix change, when one considers
complex (rather than real) polynomials and initial vectors
in these problems.
\end{abstract}

%
%
\begin{keywords}
    GMRES convergence, matrix approximation problems, minmax
\end{keywords}
\begin{AMS}
    65F10, 49K35, 41A52
\end{AMS}

\section{Introduction}

Let a nonsingular matrix $A\in\mathbb{R}^{n\times n}$ and a vector
$b\in\mathbb{R}^{n}$ be given. Consider solving the system
of linear algebraic equations $Ax=b$ with the initial guess $x_{0}=0$ using
the GMRES method~\cite{SaSc86}. This method generates a sequence of iterates
$x_{k}\in\mathcal{K}_{k}(A,b)\equiv{\rm span}\{b,Ab,\dots A^{k-1}b\}$,
$k=1,2,\dots\,$, so that the corresponding $k$th residual $r_{k}\equiv b-Ax_{k}$
satisfies
\begin{equation}
\|r_{k}\|\;=\;\min_{p\in\pi_{k}}\,\|\, p(A)b\,\|\,.\label{eqn:GMRESAPP}
\end{equation}
Here $\|\cdot\|$ denotes the Euclidean norm, and $\pi_{k}$ denotes the set
of real polynomials of degree at most~$k$ and with value one at the origin. Note that
for a real matrix $A$ and a real right hand side $b$ the minimum in (\ref{eqn:GMRESAPP})
is achieved for a real polynomial. Considering only real polynomials therefore
does not represent any restriction.

It is clear from (\ref{eqn:GMRESAPP}), that the sequence of GMRES
residual norms $\|r_k\|$, $k=1,2,\dots\,,$ is nonincreasing.
It terminates with $r_k=0$ if and only if $k$ is equal to $d(A,b)$, the
degree of the minimal polynomial of the vector $b$ with respect to $A$.
For each $b$ we have $d(A,b)\leq d(A)$, the degree of the minimal polynomial
of $A$.

A geometric characterization of the iterate $x_{k}\in\mathcal{K}_{k}(A,b)$,
which is mathematically equivalent to (\ref{eqn:GMRESAPP}), is given by
\begin{equation}\label{eqn:GMRESORTH}
r_k \;\perp\; A\mathcal{K}_k(A,b)\,.
\end{equation}
To emphasize the dependence of the $k$th GMRES residual $r_{k}$ on the given data
$A$, $b$ and $k$ we will sometimes write
\[
r_{k}=\mathrm{GMRES}(A,b,k)\qquad\mbox{or}\qquad r_{k}=p_{k}(A)b,
\]
where $p_{k}\in\pi_k$ is the \emph{$k$th GMRES polynomial} of $A$ and $b$,
i.e., the polynomial that solves the minimization problem on the right hand side
of (\ref{eqn:GMRESAPP}). As long as $r_k\neq 0$, this polynomial is uniquely
determined. The matrix $p_{k}(A)$ is called the \emph{$k$th GMRES residual matrix}
of $A$ and~$b$. For further basic properties and algorithmic details of the GMRES
method we refer to the original paper~\cite{SaSc86} or the
books~\cite{GreBook97,LieStrBook12,SaaBook03}.

In the following we will assume without loss of generality that $\|b\|=1$.
A common approach for investigating the GMRES convergence behavior is to bound
(\ref{eqn:GMRESAPP}) independently of~$b$.
For each iteration step $k$ the best possible bound on the GMRES
residual norm that is independent of~$b$ is given by maximizing the right hand side
of (\ref{eqn:GMRESAPP}) over all unit norm vectors, i.e.,
\begin{equation}
\|r_k\|\;=\;\min_{p\in\pi_{k}}\,\|\, p(A)b\,\|\;\leq\;\max_{\|v\|=1}\min_{p\in\pi_{k}}\|\, p(A)v\,\|
\;\equiv\;\Psi_{k}(A)\,.\label{eqn:WCapp}
\end{equation}
The quantity $\Psi_k(A)$ is called \emph{the $k$th worst-case GMRES approximation}.
It is easy to see that the bound (\ref{eqn:WCapp}) is sharp in the sense that for
each given $A$ and $k$ there exists a unit norm vector $b$ so that the corresponding
$k$th GMRES residual vector satisfies $\|r_{k}\|=\Psi_{k}(A)$. We will call such a
vector $b$, the corresponding $k$th GMRES polynomial $p_{k}$ and the corresponding
$k$th GMRES residual matrix $p_{k}(A)$ the \emph{$k$th worst-case GMRES} initial
vector, polynomial and residual matrix, respectively. If $A$ is singular, then
$\Psi_k(A)=1$ for all $k\geq 0$  (to see this, simply take $b$ as a unit norm
vector in the kernel of $A$). Hence only the case of a nonsingular matrix $A$ is
of interest in this context. For such $A$ we have
$$1\geq \Psi_1(A)\geq\cdots\geq \Psi_{d(A)-1}(A)>\Psi_{d(A)}(A)=0,$$
and therefore we only need to consider $1\leq k\leq d(A)-1$.

It is known that $\Psi_{k}(A)$ for a fixed $k$ is a continuous function on the
open set of nonsingular matrices; see~\cite[Theorem~3.1]{Jo1994a}
or~\cite[Theorem~2.5]{FaJoKnMa1996}. Moreover, it was shown
in~\cite[Theorem~2.7]{FaJoKnMa1996} that $\Psi_{k}(A)=1$ for a nonsingular matrix $A$,
if and only if zero is contained in some generalized field of values derived from the
powers $I,A,\dots,A^k$. Most of the other previously published results on worst-case
GMRES are devoted to studying the tightness of the inequality
\begin{eqnarray}
\Psi_k(A) \;\leq\; \min_{p\in\pi_{k}}\|p(A)\|\;\equiv \varphi_{k}(A),\label{eqn:bound02}
\end{eqnarray}
which is easily derived from (\ref{eqn:WCapp}) using the submultiplicativity
property of the Euclidean norm. The quantity $\varphi_{k}(A)$ is called the
{\em $k$th ideal GMRES approximation}~\cite{GrTr1994}. The polynomial for
which the minimum is attained in (\ref{eqn:bound02}) is called the
{\em $k$th ideal GMRES polynomial} of $A$. This polynomial is uniquely
determined; see~\cite{GrTr1994,LiTi2009}. It was shown that (\ref{eqn:bound02})
is an equality for normal matrices $A$ and all $k\geq0$, and for $k=1$ and any
nonsingular $A$~\cite{GrGu1994,Jo1994}. Some nonnormal matrices~$A$
are known for which $\Psi_{k}(A)<\varphi_{k}(A)$, even $\Psi_{k}(A)\ll\varphi_{k}(A)$,
for certain~$k$; see~\cite{FaJoKnMa1996,To1997}.

The ideal GMRES approximation problem can be formulated as a semidefinite program
(see~\cite{ToTr1998}) and hence can be solved efficiently by standard software.
On the other hand, we are unaware of any efficient algorithm for solving
the worst-case GMRES approximation problem, so that in practice one needs
to resort to a ``general purpose'' nonlinear solver to compute worst-case
GMRES data. The difficult nonlinear nature of the worst-case GMRES approximation
problem may be one of the reasons why this problem is less studied
(both theoretically and numerically)
than the ideal GMRES approximation problem.

This paper is mainly devoted to characterizations of the worst-case GMRES problem (\ref{eqn:WCapp}).
We first show in Section~2 that $\Psi_{k}(A)=\Psi_{k}(A^T)$, and that worst-case initial
vectors satisfy a certain ``cross equality''. Next, in Section~3, we look at
the worst-case GMRES approximation problem from the optimization point of~view and show
that $k$th worst-case GMRES initial vectors are always right singular vectors of the
corresponding $k$th GMRES residual matrix. In Section~\ref{sec:Toh} we prove that a
$k$th worst-case GMRES polynomial may not be uniquely determined (unlike the $k$th
ideal GMRES polynomial), and we give a numerical example for two different
polynomials and corresponding initial vectors that both attain the same worst-case GMRES
value at the same step $k$. In Section~\ref{sec:WCvsID} we further study
differences between the worst-case and the ideal GMRES approximations. In particular,
we state a parameterized set of matrices $A$ of arbitrary size $2n$ (with $n\geq 2$)
for which the inequality in (\ref{eqn:bound02}) is sharp for all $k=3,\dots,2n-1$.
In the previously published examples in~\cite{FaJoKnMa1996,To1997}, a small matrix $A$
is constructed for which the sharp inequality occurs for exactly one $k$.
Finally, in Section~\ref{sec:rc} we analyze whether the values of the max-min approximation (\ref{eqn:WCapp})
and the min-max approximation (\ref{eqn:bound02}) for a real matrix change if we consider
the maximization over complex vectors and/or the minimization over complex polynomials.
This analysis gives another indication for the difference between the two approximation
problems.

\section{The cross equality}

In this section we generalize two results of Zavorin \cite{Za2001}. The first shows that
$\Psi_{k}(A)=\Psi_{k}(A^{T})$ and the second concerns a special property of worst-case
initial vectors (they satisfy the so-called ``cross equality''). Note that Zavorin proved
these results only for diagonalizable matrices using quite a complicated technique based
on the decomposition of the corresponding Krylov matrix.
Using a simple algebraic technique we prove these results for
general matrices. All results presented in this section
can easily be generalized from real to complex matrices.\medskip

\begin{theorem}\label{thm:01}
If $A\in\mathbb{R}^{n\times n}$ is a nonsingular matrix, then $\Psi_{k}(A)=\Psi_{k}(A^{T})$
for all $k=1,\dots,d(A)-1$.
\end{theorem}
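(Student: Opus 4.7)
The plan is to prove $\Psi_k(A)\le \Psi_k(A^T)$; the reverse inequality will then follow by applying the same argument to $A^T$ in place of $A$. By continuity of $v\mapsto\min_{p\in\pi_k}\|p(A)v\|$ on the compact unit sphere the supremum $\Psi_k(A)$ is attained at some unit vector $v$. Let $p\in\pi_k$ be the corresponding GMRES polynomial, uniquely determined since $\|r\|=\Psi_k(A)>0$ for $k\le d(A)-1$, and set $r=p(A)v$.

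The engine of the proof is a ``cross equality'' of the form $p(A^T)r=\|r\|^2\,v$, which I would deduce from first-order optimality of $v$ on the sphere. For any tangent direction $h\perp v$ with $\|h\|=1$, let $v_\epsilon=(v+\epsilon h)/\|v+\epsilon h\|$ and write $p^{v_\epsilon}=p+\delta p$. The polynomial $\delta p$ satisfies $\delta p(0)=0$, so $\delta p(A)v\in A\mathcal{K}_k(A,v)$, and by the Galerkin orthogonality \eqref{eqn:GMRESORTH} this vector is orthogonal to~$r$. Because the coefficients of $p^v$ are determined by the Gram matrix of $Av,\dots,A^kv$, which is invertible in a neighborhood of~$v$, one has $\delta p = O(\epsilon)$, and a Taylor expansion gives
\[
\|r_k(A,v_\epsilon)\|^2 = \|p^{v_\epsilon}(A)v_\epsilon\|^2 = \|r\|^2 + 2\epsilon\,h^T p(A)^T r + O(\epsilon^2),
\]
with the contribution of $\delta p$ dropping out by the orthogonality just noted. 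Since $v$ is a maximizer, the linear term must vanish for every $h\perp v$, i.e.\ $p(A)^T r\in\mathrm{span}(v)$; taking the inner product with~$v$ and using $v^T p(A)^T r=r^T r=\|r\|^2$ identifies the scalar and yields the cross equality.

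With this in hand, I check that the \emph{same} polynomial $p$ solves the GMRES problem for $A^T$ with initial vector $r$. The required orthogonality $p(A^T)r\perp A^T\mathcal{K}_k(A^T,r)$, after substituting $p(A^T)r=\|r\|^2 v$, reduces to $\langle v,(A^T)^jr\rangle=\langle A^jv,r\rangle=0$ for $j=1,\dots,k$, which is precisely~\eqref{eqn:GMRESORTH} for~$r$. Hence $r_k(A^T,r)=p(A^T)r=\|r\|^2 v$, and rescaling the initial vector to unit norm gives $\|r_k(A^T,r/\|r\|)\|=\|r\|=\Psi_k(A)$, so $\Psi_k(A^T)\ge\Psi_k(A)$ as desired.

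I expect the main obstacle to be a rigorous first-order expansion ensuring that the variation of the GMRES polynomial does not contaminate the leading-order term. The clean algebraic point that disposes of this is the observation that any admissible perturbation $\delta p$ of~$p$ within $\pi_k$ vanishes at the origin, so $\delta p(A)v$ lies in $A\mathcal{K}_k(A,v)$ and is therefore annihilated against~$r$ by \eqref{eqn:GMRESORTH} — an identity that turns what could be phrased as an envelope-theorem argument into an elementary linear-algebraic one.
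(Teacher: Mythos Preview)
Your argument is correct, but it takes a genuinely different route from the paper. The paper's proof is purely algebraic and requires no differentiability: starting from the GMRES orthogonality $\langle A^j b,r_k\rangle=0$ it observes that for \emph{any} $q\in\pi_k$,
\[
\|r_k\|^2=\langle b,r_k\rangle=\langle q(A)b,r_k\rangle=\langle b,q(A^T)r_k\rangle\le \|q(A^T)r_k\|,
\]
by Cauchy--Schwarz. Dividing by $\|r_k\|$ and minimizing over $q$ gives $\Psi_k(A)\le\Psi_k(A^T)$ in one line. Your approach instead uses first-order optimality of the worst-case vector on the sphere, together with the envelope-theorem observation that the variation $\delta p(A)v$ is killed by the Galerkin condition, to derive the stronger identity $p(A^T)r=\|r\|^2 v$. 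This buys you more than the theorem asks for: you have shown that the \emph{same} polynomial $p$ realizes the GMRES minimum for $A^T$ at $r/\|r\|$, a fact the paper only reaches later (Corollary~\ref{col:pAA}) after developing the stationary-point analysis of Section~3 separately. The trade-off is that your proof needs the smoothness of the GMRES polynomial as a function of $v$ near the maximizer (invertibility of the Gram matrix on an open neighborhood), whereas the paper's Cauchy--Schwarz argument works for an arbitrary unit vector with $r_k\neq 0$ and needs no calculus at all.
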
\medskip

\begin{proof}
Let $1\leq k\leq d(A)-1$ and
consider any unit norm vector $b$ such that the corresponding $k$th GMRES
residual vector $r_{k}=p_k(A)b$ is nonzero. The defining property (\ref{eqn:GMRESORTH})
of $r_k$ means that $\langle A^jb,r_k\rangle=0$ for $j=1,\dots,k$. Hence,
for any $q\in \pi_k$,
\begin{eqnarray}
\|r_k\|^2 = \langle p_k(A)b,r_{k}\rangle = \langle b,r_{k}\rangle =
\langle q(A)b,r_{k}\rangle = \langle b,q(A^{T})r_{k}\rangle
\leq  \|q(A^{T})r_{k}\|, \label{eqn:wc1}
\end{eqnarray}
where the last inequality follows from the Cauchy-Schwarz inequality and $\|b\|=1$.

If $b$ is a unit norm $k$th worst-case GMRES initial vector and $r_k$ is the corresponding
$k$th GMRES residual vector, then the previous inequality means that
\begin{equation}\label{eqn:wc2}
\|r_k\|^2 \;=\; \Psi_{k}^{2}(A)\;\leq\;\|q(A^{T})r_{k}\|,
\end{equation}
where $q\in\pi_k$ is arbitrary. Dividing by $\|r_k\|$ and taking the minimum over all $q\in\pi_{k}$
we get
\begin{equation}
\Psi_{k}(A)\leq\min_{q\in\pi_{k}}\left\Vert q(A^{T})\frac{r_{k}}{\|r_{k}\|}\right\Vert \leq\Psi_{k}(A^{T}).\label{eqn:wcez}
\end{equation}
Now we can reverse the roles of $A$ and $A^T$, and then repeat the whole argument
to obtain the opposite inequality, i.e., $\Psi_{k}(A^{T})\leq\Psi_{k}(A)$.
\end{proof}\medskip

The following theorem describes a special property of worst-case initial
vectors: If we apply GMRES to $A$ and a unit norm $k$th worst-case
initial vector $b$ giving at step $k$ the residual vector $r_{k}$, and
then $k$ steps of GMRES to $A^{T}$ and the initial vector $r_{k}/\|r_k\|$,
we obtain again the original initial vector $b$ (up to a scaling factor).\medskip

\begin{theorem}
\label{thm:cross}
Let $A\in\mathbb{R}^{n\times n}$ be a nonsingular matrix, and let $1\leq k\leq d(A)-1$.
If $b\in\mathbb{R}^{n}$ is a unit norm $k$th worst-case GMRES initial vector and
\[
r_{k} \equiv  \mathrm{GMRES}(A,b,k),\qquad
s_{k}  \equiv  \mathrm{GMRES}\left(A^{T},\frac{r_{k}}{\|r_{k}\|},k\right),
\]
then
\[
\|s_{k}\|=\|r_{k}\|=\Psi_{k}(A)
\qquad\mbox{and}\qquad
b=\frac{s_{k}}{\Psi_{k}(A)}.
\]
\end{theorem}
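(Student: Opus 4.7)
My plan is to squeeze the inequality chain from the proof of Theorem~\ref{thm:01} until it becomes an equality, and then read off both the norm identity and the direction identity from the Cauchy--Schwarz equality case. The starting point is the observation that for every $q\in\pi_k$,
\[
\|r_k\|^2 \;=\; \langle b,\,q(A^T)r_k\rangle \;\leq\; \|q(A^T)r_k\|,
\]
which was derived from (\ref{eqn:GMRESORTH}) and $\|b\|=1$, without yet invoking any optimality of $q$.

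First I would apply this with $q$ chosen to be the $k$th GMRES polynomial $q_k$ of $A^T$ with starting vector $r_k/\|r_k\|$, so that $q_k(A^T)(r_k/\|r_k\|) = s_k$. Dividing the displayed inequality by $\|r_k\|=\Psi_k(A)$ gives $\Psi_k(A)\leq \|s_k\|$. On the other hand, since $r_k/\|r_k\|$ is a unit vector, the definition of the worst-case approximation yields $\|s_k\|\leq \Psi_k(A^T)$, and Theorem~\ref{thm:01} tells us $\Psi_k(A^T)=\Psi_k(A)$. Combining these bounds forces $\|s_k\|=\Psi_k(A)=\|r_k\|$, which proves the first identity and also shows that $r_k/\|r_k\|$ is itself a $k$th worst-case GMRES initial vector for $A^T$.

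The main work is the direction identity $b=s_k/\Psi_k(A)$, and here the key is that the whole chain
\[
\|r_k\|^2 \;=\; \langle b,\,q_k(A^T)r_k\rangle \;\leq\; \|b\|\,\|q_k(A^T)r_k\| \;=\; \|r_k\|\,\|s_k\| \;=\; \|r_k\|^2
\]
now collapses to equalities. The Cauchy--Schwarz equality case, together with the fact that the inner product on the left is positive (it equals $\|r_k\|^2>0$), implies that $q_k(A^T)r_k$ is a positive scalar multiple of $b$. Writing $q_k(A^T)r_k=\alpha b$ with $\alpha>0$, and recalling $q_k(A^T)r_k=\|r_k\|s_k$, I would take norms to identify $\alpha=\|s_k\|\|r_k\|=\Psi_k(A)^2$, hence $s_k=\Psi_k(A)\,b$, which is the claim.

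The only subtle point I anticipate is making sure the scalar in the Cauchy--Schwarz step has the right sign and magnitude; this is handled as above because the inner product $\langle b,q_k(A^T)r_k\rangle$ was shown to equal $\|r_k\|^2>0$ in (\ref{eqn:wc1}), so there is no ambiguity. Everything else is a straightforward book-keeping of the two inequalities already used in the proof of Theorem~\ref{thm:01}.
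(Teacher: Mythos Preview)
Your proof is correct and follows essentially the same route as the paper: you specialize the inequality (\ref{eqn:wc1}) to $q=q_k$, use Theorem~\ref{thm:01} to collapse the chain $\Psi_k(A)\leq\|s_k\|\leq\Psi_k(A^T)=\Psi_k(A)$ to an equality, and then invoke the Cauchy--Schwarz equality case to recover $b$ as the normalized $s_k$. The only cosmetic difference is that the paper writes the final step as $b=q_k(A^T)r_k/\|q_k(A^T)r_k\|$ and simplifies the denominator, whereas you set $q_k(A^T)r_k=\alpha b$ and determine $\alpha$ by taking norms; these are equivalent.
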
\medskip

\begin{proof}
Let $b$ be a unit norm $k$th worst-case GMRES initial vector and
let $r_k=\mathrm{GMRES}(A,b,k)$. In addition, let $s_k=\mathrm{GMRES}(A^T,r_k/\|r_k\|,k)$
and let $q_k$ be the corresponding $k$th GMRES polynomial. Using this polynomial
in (\ref{eqn:wc2}) yields
$$\|r_k\|=\Psi_k(A)\;\leq\;\left\Vert q_{k}(A^{T})\frac{r_{k}}{\|r_{k}\|}\right\Vert
\;=\;\|s_k\|\;\leq\; \Psi_k(A^T).$$
However, as shown in Theorem~\ref{thm:01}, equality holds throughout, which
shows the first assertion.

Moreover, since $\|r_k\|=\|s_k\|$, the (Cauchy-Schwarz) inequality on the right
of (\ref{eqn:wc1}) is an equality for the given $b$ and $q=q_k$, i.e.,
$$\langle b,q_k(A^T)r_k\rangle = \|q_k(A^T)r_k\|.$$
Since $\|b\|=1$, this happens if and only if
\[
b\;=\;\frac{q_{k}(A^{T})r_{k}}{\|q_{k}(A^{T})r_{k}\|}\;=\;
\frac{q_{k}(A^{T})r_{k}}{\|r_{k}\|\|r_{k}\|}\;=\;\frac{s_{k}}{\|r_{k}\|},
\]
which finishes the proof.
\end{proof}\medskip

The previous theorem shows that if $b$ is a unit norm
$k$th worst-case GMRES initial vector, then (with the same
notation as in the proof above)
$$\Psi_k(A)b\;=\;s_k\;=\;q_k(A^T)\frac{r_k}{\|r_k\|}=q_k(A^T)p_k(A)\frac{b}{\|r_k\|},$$
or, equivalently,
\begin{equation}\label{eqn:mmm}
q_{k}(A^{T})p_{k}(A)\, b\;=\;\Psi_{k}^{2}(A)\, b.
\end{equation}
In other words, $b$ is an eigenvector of the matrix
$q_{k}(A^{T})p_{k}(A)$
with the corresponding eigenvalue $\Psi_{k}^{2}(A)$. In Corollary~\ref{col:pAA} we will show
that $q_{k}=p_{k}$, i.e., that $b$ is a right singular vector of the $k$th
worst-case GMRES residual matrix $p_{k}(A)$.

To further investigate vectors with the special property introduced in
Theorem~\ref{thm:cross} we use the following definition.
\medskip

\begin{definition}\label{def:cross}
Let $A\in \mathbb{R}^{n\times n}$ be nonsingular. We say that a unit norm
vector $b\in\mathbb{R}^{n}$ with $d(A,b)>k$ satisfies the cross equality for $A$ and the
step $k\geq 1$, if
\[
b=\frac{s_{k}}{\|s_{k}\|},\quad\mbox{where}\quad
s_{k}\equiv \mathrm{GMRES}\left(A^{T},\frac{r_{k}}{\|r_{k}\|},k\right),\quad
r_{k}\equiv \mathrm{GMRES}(A,b,k).
\]
\end{definition}

\begin{algorithm}[ht]
\caption{(Cross iterations 1)}
\label{alg:cross}
\begin{algorithmic}[0]
\STATE $b^{(0)}=b$,
\FOR{$j=1,2,\dots$}
\STATE $r_{k}^{(j)}=\mathrm{GMRES}(A,b^{(j-1)},k)$
\STATE $c^{(j-1)}=r_{k}^{(j)}/\|r_{k}^{(j)}\|$
\STATE $s_{k}^{(j)}=\mathrm{GMRES}(A^{T},c^{(j-1)},k)$
\STATE $b^{(j)}=s_{k}^{(j)}/\|s_{k}^{(j)}\|$
\ENDFOR
\end{algorithmic}
\end{algorithm}

Inspired by Theorem~\ref{thm:cross} we define the iterative process shown
in Algorithm~\ref{alg:cross}.
To analyze this algorithm,
let us denote
$$r_k^{(j)}=p_k^{(j)}(A)b^{(j-1)}\quad\mbox{and}\quad
s_k^{(j)}=q_k^{(j)}(A^T)c^{(j-1)}.$$
Using $q=q_k^{(j)}$ in (\ref{eqn:wc1}) we then get
$$\|r_k^{(j)}\|^2\;\leq\; \|q_k^{(j)}(A^T)r_k^{(j)}\|\;=\;\|r_k^{(j)}\|\,
\|q_k^{(j)}(A^T)c^{(j-1)}\|\;=\;\|r_k^{(j)}\|\,\|s_k^{(j)}\|.$$
Now consider (\ref{eqn:wc1}) with the roles of $A$ and $A^T$ reversed, i.e.,
\begin{eqnarray*}
\|s_k^{(j)}\|^2 &=& \langle q_k^{(j)}(A^T)c^{(j-1)},s_k^{(j)}\rangle
\;=\;\langle c^{(j-1)}, q(A)s_k^{(j)}\rangle \;=\;
\|s_k^{(j)}\|\, \langle c^{(j-1)}, q(A)b^{(j)}\rangle\\
&\leq & \|s_k^{(j)}\|\, \|q(A)b^{(j)}\|,
\end{eqnarray*}
for all $q\in\pi_k$. We can choose $q=p_k^{(j+1)}$ and thus
obtain $\|s_k^{(j)}\|\leq \|r_k^{(j+1)}\|$. In summary, we have
shown that
\begin{equation}
\|r_{k}^{(j)}\|\leq\|s_{k}^{(j)}\|\leq\|r_{k}^{(j+1)}\|
\leq\|s_{k}^{(j+1)}\|\leq\Psi_{k}(A),\quad j=1,2,\dots\,.\label{eqn:sequence}
\end{equation}
Hence the sequences of norms $\|r_{k}^{(j)}\|$ and $\|s_{k}^{(j)}\|$,
$j=1,2,\dots\,$, interlace each other, are both nondecreasing, and are
both bounded by $\Psi_{k}(A)$. This implies that both sequences converge
to the same limit, which does not exceed $\Psi_{k}(A)$.

Consequently,
for any initial vector $b^{(0)}$, Algorithm~\ref{alg:cross} converges
to a vector that satisfies the cross equality for $A$ and step $k$.
If $b^{(0)}$ satisfies the cross equality for $A$ and step $k$,
then trivially equality holds in (\ref{eqn:sequence}) for all $j$.
On the other hand, if equality holds in (\ref{eqn:sequence}) for
one $j$, then, using \eqref{eqn:wc1},
$$
\langle b^{(j)},q_k^{(j)}(A^{T})r_{k}^{(j)}\rangle
=\|r_{k}^{(j)}\|^{2}=\|q_k^{(j)}(A^{T})r_{k}^{(j)}\|=\|s_k^{(j)}\|,
$$
and we have reached a vector that satisfies the cross equality.

From the above it is clear that the cross equality represents a necessary
condition for a vector $b^{(0)}$ to be a worst-case initial vector. On the
other hand, we can ask whether this condition is sufficient, or, at
least, whether the vectors that satisfy the cross equality are in
some sense special. To investigate this question we present the following
lemma.\medskip

\begin{lemma} Let $A\in\mathbb{R}^{n\times n}$ be nonsingular, $k\geq 1$,
 and $b \in\mathbb{R}^{n}$ be a unit norm initial vector with $d(A,b) > k$.
If $r_k = \mathrm{GMRES}(A,b,k)$, then $d(A^T, r_k) > k$, and $b$ satisfies the cross
for $A$ and the step~$k$ if and only if
$b\in\mathcal{K}_{k+1}(A^{T},r_k)$.
In particular, each unit norm vector $b$ with $d(A,b)=n$ satisfies the cross equality
for $A$ and the step $k=n-1$.
\end{lemma}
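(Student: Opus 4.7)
The plan is to prove the three assertions in order: the side claim $d(A^T, r_k) > k$, then the biconditional, and finally the ``in particular'' corollary. Throughout I will use the GMRES orthogonality relation (\ref{eqn:GMRESORTH}) applied both to $(A,b)$ and to $(A^T, r_k/\|r_k\|)$, together with the inequality (\ref{eqn:wc1}) and the identity $\langle b, r_k\rangle = \|r_k\|^2$ that underlies it.

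For the side claim, (\ref{eqn:wc1}) already gives $\|r_k\|^2 \leq \|q(A^T) r_k\|$ for every $q \in \pi_k$, so no polynomial in $\pi_k$ annihilates $r_k$ under $A^T$. If some nonzero polynomial $f$ of degree at most $k$ did satisfy $f(A^T) r_k = 0$, I would factor $f(z) = z^\ell g(z)$ with $g(0) \neq 0$; nonsingularity of $A^T$ gives $g(A^T) r_k = 0$, and then $g/g(0) \in \pi_{\deg g} \subseteq \pi_k$ supplies the desired contradiction. Hence $d(A^T, r_k) > k$ and $\mathcal{K}_{k+1}(A^T, r_k)$ has dimension exactly $k+1$.

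For the biconditional, the forward implication is immediate from Definition~\ref{def:cross}: if the cross equality holds then $b = s_k/\|s_k\|$ and $s_k = q_k(A^T)(r_k/\|r_k\|)$ for some $q_k \in \pi_k$, placing $b$ in $\mathcal{K}_{k+1}(A^T, r_k)$. For the converse, the key observation is that both $b$ and $s_k$ lie in $\mathcal{K}_{k+1}(A^T, r_k)$ and are both orthogonal to the $k$-dimensional subspace $\mathcal{W} = \mathrm{span}\{A^T r_k, \dots, (A^T)^k r_k\}$: for $s_k$ this is (\ref{eqn:GMRESORTH}) applied to $A^T$, while for $b$ it is the transposed form of $r_k \perp A\mathcal{K}_k(A,b)$. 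Since the side claim gives $\dim \mathcal{K}_{k+1}(A^T, r_k) = k+1$, the orthogonal complement of $\mathcal{W}$ inside this Krylov space is one-dimensional; both unit vectors $b$ and $s_k/\|s_k\|$ lie in it, so $b = \pm s_k/\|s_k\|$. The sign is pinned down by comparing the positive quantities $\langle b, r_k\rangle = \|r_k\|^2$ and $\langle r_k, s_k\rangle = \|r_k\|\|s_k\|^2$ (the standard GMRES identity applied to $A^T$ and $r_k/\|r_k\|$), which rules out the minus sign.

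The ``in particular'' statement then follows because $d(A,b) = n$ and $k = n-1$ make the main hypothesis hold, after which the side claim forces $\mathcal{K}_{k+1}(A^T, r_k) = \mathbb{R}^n$ and the biconditional applies trivially. The main obstacle is the converse direction of the biconditional, and more precisely the dimension count on which it rests; once the side claim is established, the rest is a matter of identifying $b$ and $s_k/\|s_k\|$ as the two unit vectors in a common one-dimensional line.
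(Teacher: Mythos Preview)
Your proof is correct and follows essentially the same route as the paper: both rewrite the GMRES orthogonality (\ref{eqn:GMRESORTH}) as $b\perp A^{T}\mathcal{K}_{k}(A^{T},r_{k})$, use (\ref{eqn:wc1}) to obtain $d(A^{T},r_k)>k$, and then identify $b$ and $s_k/\|s_k\|$ as the unique (up to sign) unit vector in $\mathcal{K}_{k+1}(A^{T},r_k)$ orthogonal to $A^{T}\mathcal{K}_{k}(A^{T},r_k)$. Your version is more explicit about the factoring step in the side claim and the sign determination via $\langle b,r_k\rangle=\|r_k\|^2$ and $\langle r_k,s_k\rangle=\|r_k\|\,\|s_k\|^2$, but the underlying argument is the same.
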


\medskip
\begin{proof}
The nonzero GMRES residual
$r_k\in b+A\mathcal{K}_{k}(A,b) \subset \mathcal{K}_{k+1}(A,b)$
is uniquely determined by the orthogonality conditions (\ref{eqn:GMRESORTH}),
which can be written as
\[
0=\langle A^{j}b,r_k\rangle=\langle b,(A^{T})^{j}r_{k}\rangle,\quad
\mbox{for}\quad j=1,\dots,k,
\]
or, equivalently,
\begin{equation}\label{eqn:bort}
b\perp A^{T}\mathcal{K}_{k}(A^{T},r_{k}).
\end{equation}
Now let $s_k\equiv \mbox{GMRES}(A^T,r_k/\|r_k\|,k)$.
From (\ref{eqn:wc1}) we know that $\|s_k\|\geq \|r_k\|>0$, i.e.
$d(A^T,r_k)>k$, and
\begin{equation}\label{eqn:sort}
s_{k}\in \frac{r_k}{\|r_k\|}+A^T\mathcal{K}_{k}(A^T,r_k)\subset
\mathcal{K}_{k+1}(A^T,r_k),\quad s_{k}\perp A^T\mathcal{K}_{k}(A^T,r_k)\,.
\end{equation}
If $b$ satisfies the cross equality for $A$ and the step $k$, then
$b=s_k/\|s_k\|$ and \eqref{eqn:sort} implies that $b\in\mathcal{K}_{k+1}(A^{T},r_k)$.
On the other hand, if $b\in\mathcal{K}_{k+1}(A^{T},r_k)$, then
$\langle b,r_k \rangle = \| r_k \|^2 \neq 0$ and \eqref{eqn:bort}
imply that $b=s_k/\|s_k\|$.

For $k=n-1$, we have $\mathcal{K}_{k+1}(A^T,r_k)=\mathbb{R}^n$,
i.e. $b \in \mathcal{K}_{k+1}(A^T,r_k)$ is always satisfied.
\end{proof}\medskip

\begin{figure}
\begin{center}
\includegraphics[width=6.25cm]{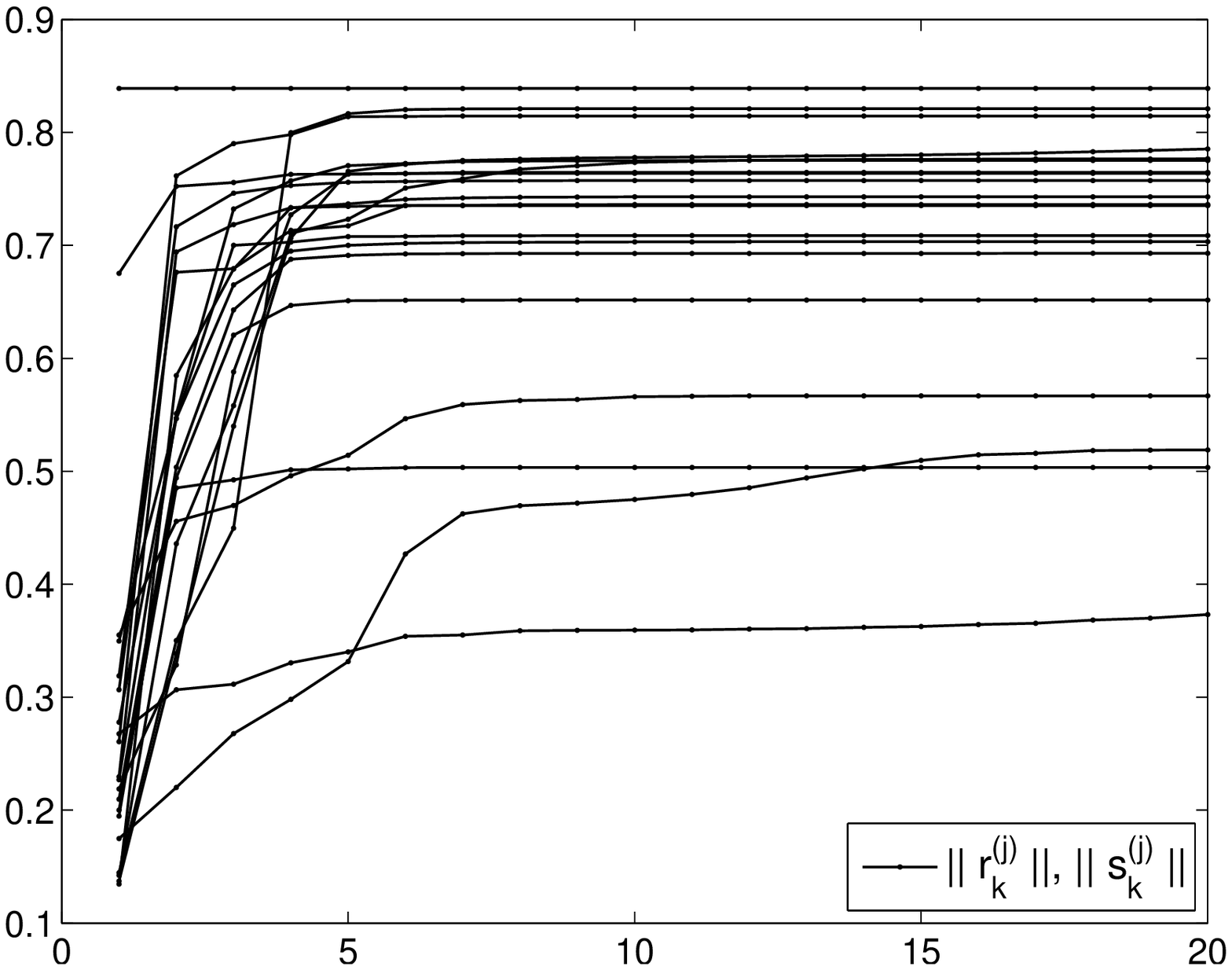}\hfill
\includegraphics[width=6.25cm]{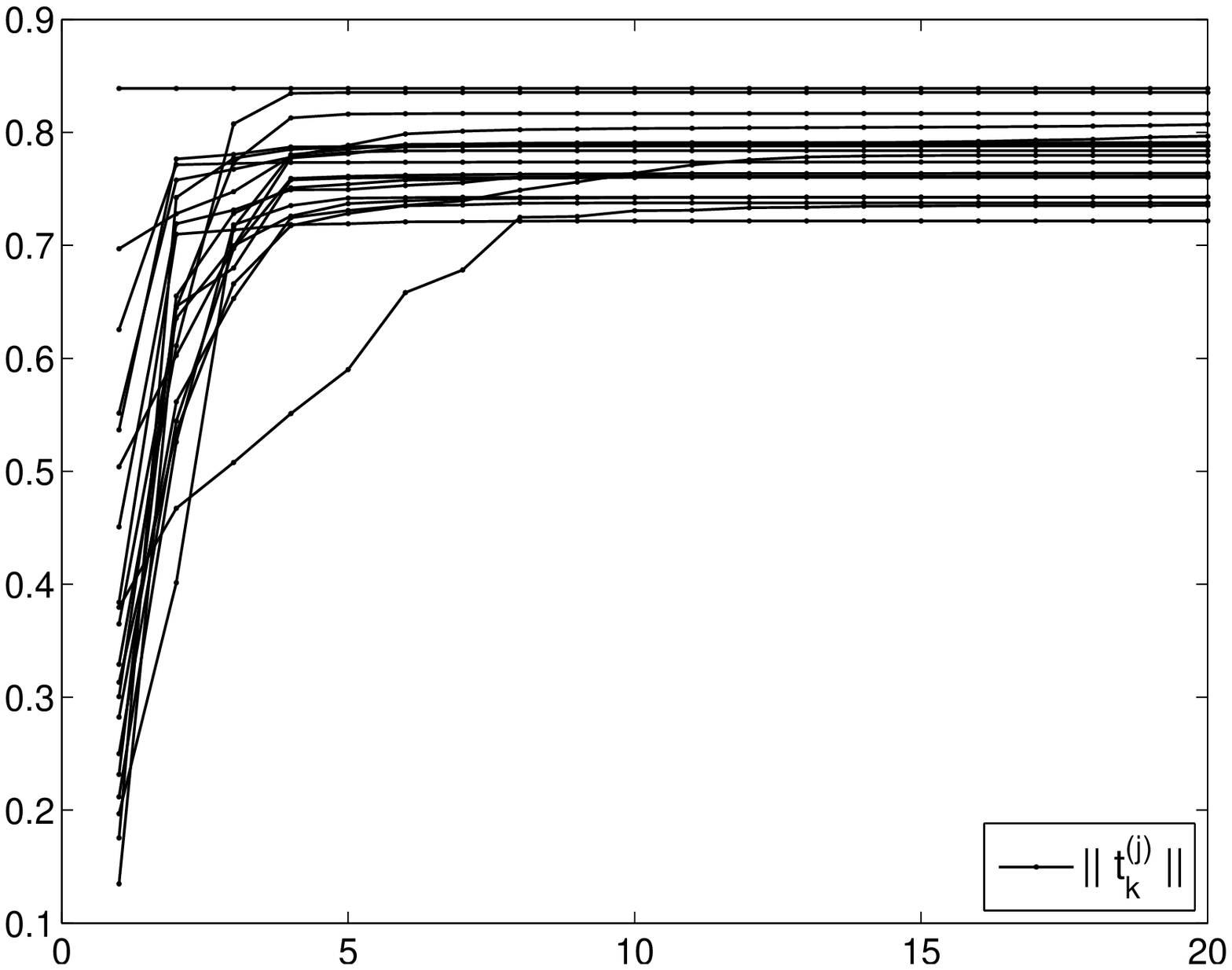}
\caption{Cross iterations for random initial vectors.}
\label{fig:cross}
\end{center}
\end{figure}

To give a numerical example for Algorithm~\ref{alg:cross} we consider $A$
being the Jordan block $J_\lambda$ of size $11$ with the eigenvalue
$\lambda=1$, and we and choose $k=5$. In this case, the ideal GMRES
matrix $\varphi_5(A)$ has a simple maximal singular value, as numerically
observed in \cite{TiLiFa2007}. Using the results of Greenbaum and
Gurvits in~\cite{GrGu1994} we know that then
$\Psi_5(J_\lambda)=\varphi_5(J_\lambda)$, and, moreover, that the
corresponding worst-case initial vector is the right singular vector
that corresponds to the maximal singular value of the ideal GMRES
matrix $\varphi_5(A)$. Hence, in this case the 5th worst-case initial
vector is uniquely determined up to scaling.

In the left part of \figurename~\ref{fig:cross} we show the results of
Algorithm~\ref{alg:cross} started with 20 random unit norm initial vectors.
Each line represents the sequence $\|r_{k}^{(j)}\|,\|s_{k}^{(j)}\|$,
for $j=1,\dots,10$. In the end of each of the 20 runs we get a vector
that satisfies (up to a small inaccuracy) the cross equality for $J_\lambda$
and $k=5$. We can observe that there are many initial vectors that satisfy the
cross equality, and there seems to be no special structure in the norms
that are attained in the end. In particular, none of the 20 runs
results in a 5th worst-case initial vector for which the norm $\Psi_5(A)$
is attained (this value is visualized by the highest horizontal line
in the figure).

We will now slightly modify the cross iteration Algorithm~\ref{alg:cross}.
Having a initial vector $b^{(j-1)}$  we always apply both, GMRES with
$A$ as well as GMRES with $A^T$, and look at the resulting GMRES residual
norm. We take as a resulting residual the one with the greater norm; see
Algorithm~\ref{alg:cross2}. After the process converges, we get again a vector
that satisfies the cross equality.

\begin{algorithm}[h]
\caption{(Cross iterations 2)}
\label{alg:cross2}
\begin{algorithmic}[0]
\STATE $b^{(0)}=b$,
\FOR{$j=1,2,\dots$}
\STATE $v =\mathrm{GMRES}(A,b^{(j-1)},k)$
\STATE $w =\mathrm{GMRES}(A^T,b^{(j-1)},k)$
 \IF{$\|v\|<\|w\|$}
	\STATE  $t_{k}^{(j)}=w$
 \ELSE
 	\STATE  $t_{k}^{(j)}=v$
 \ENDIF
\STATE $b^{(j)}=t_{k}^{(j)}/\|t_{k}^{(j)}\|$
\ENDFOR
\end{algorithmic}
\end{algorithm}

This strategy is a little better than the original one when looking for
a worst-case initial vector; see \figurename~\ref{fig:cross}.
While it is usually not sufficient
to find a worst-case vector, one at least can find a reasonable
initial point for an optimization procedure that solves the nonlinear
worst-case GMRES approximation problem.

\section{Optimization point of view}
Let a nonsingular matrix $A\in\mathbb{R}^{n\times n}$ and a positive
integer $k<d(A)$ be given. For vectors $c=[c_{1},\dots,c_{k}]^{T}\in\mathbb{R}^{k}$
and $v\in\mathbb{R}^{n}$, we define the function
\begin{equation}
f(c,v)\equiv\|p(A;c)v\|^{2}=v^{T}p(A;c)^{T}p(A;c)v,\label{eq:f1}
\end{equation}
where
\[
p(z;c)=1-\sum_{j=1}^{k}c_{j}z^{j}.
\]
Equivalently, we can express the function $f(c,v)$ using the matrix
\[
K(v)\equiv[Av,A^{2}v,\dots,A^{k}v]
\]
 as
\begin{eqnarray}
f(c,v) & = & \|v-K(v)c\|^{2}=v^{T}v-2v^{T}K(v)c+c^{T}K(v)^{T}K(v)c.\label{eq:funkcional}
\end{eqnarray}
(Here only the dependence on $v$ is expressed in the notation $K(v)$, because
$A$ and $k$ are both fixed.) Note that $K(v)^{T}K(v)$ is the Gramian matrix
of the vectors $Av,A^{2}v,\dots,A^{k}v$,
\[
K(v)^{T}K(v)=\left[v^{T}(A^{T})^{i}A^{j}v\right]_{i,j=1,\dots,k}.
\]
Next, we define the function
\[
g(v)\equiv\min_{c\in\mathbb{R}^{k}}f(c,v),
\]
which represents the $k$th squared GMRES residual norm for the matrix
$A$ and the initial vector $v$, and we denote
\[
\Omega\equiv\{u\in\mathbb{R}^{n}:\ d(A,u)\geq k\},\qquad\Gamma\equiv\{u\in\mathbb{R}^{n}:\ d(A,u)<k\}.
\]
The set $\Gamma$ is a closed subset, $\Omega$ is an open subset
of $\mathbb{R}^{n}$, and $\mathbb{R}^{n}=\Omega \cup \Gamma$. Note
that $g(v)>0$ for all $v\in\Omega$ and $g(v)=0$ for all $v\in\Gamma$.
The following lemma is a special case of \cite[Proposition 2.2]{FaJoKnMa1996}
for real data and nonsingular $A$.\medskip{}

\begin{lemma}
\label{lem:cinf} In the previous notation, the function $g(v)$ is
a continous function of $v\in\mathbb{R}^{n}$, i.e., $g\in C^{0}(\mathbb{R}^{n})$,
and it is an infinitely differentiable function of $v\in\Omega$,
i.e., $g\in C^{\infty}(\Omega)$. Moreover, $\Gamma$ has measure
zero in $\mathbb{R}^{n}$.
\end{lemma}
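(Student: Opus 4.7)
The plan is to treat the three claims separately, exploiting the closed-form expression for $g$ on $\Omega$ and a direct pointwise upper bound at points of $\Gamma$.

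First I would establish $C^\infty$-smoothness on $\Omega$. For $v\in\Omega$ we have $d(A,v)\ge k$, and since $A$ is nonsingular this forces the columns $Av,\dots,A^k v$ of $K(v)$ to be linearly independent. Hence the Gramian $G(v)\equiv K(v)^T K(v)$ is invertible, and the inner minimization in the definition of $g$ is an unconstrained quadratic in $c$ with unique minimizer $c^\star(v)=G(v)^{-1}K(v)^T v$. Substituting back in \eqref{eq:funkcional} yields
\[
g(v)\;=\;v^T v\;-\;v^T K(v)\,G(v)^{-1}K(v)^T v.
\]
The entries of $K(v)$, and hence of $G(v)$, are polynomials in the entries of $v$, while matrix inversion is $C^\infty$ wherever defined. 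Consequently $g\in C^\infty(\Omega)$.

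Next I would verify continuity across the closed set $\Gamma$; continuity on $\Omega$ is already implied by the previous step. Fix an arbitrary $v_0\in\Gamma$ and set $m\equiv d(A,v_0)\le k-1$. There exist scalars $\alpha_0,\dots,\alpha_m$, not all zero, with $\sum_{j=0}^m\alpha_j A^j v_0=0$. If $\alpha_0=0$, then applying $A^{-1}$ produces a nontrivial relation among $v_0,Av_0,\dots,A^{m-1}v_0$, contradicting the definition of $m$. Hence $\alpha_0\neq 0$, and after normalization we obtain a polynomial $p\in\pi_m\subset\pi_k$ with $p(A)v_0=0$. In particular $g(v_0)=0$, and for any sequence $v\to v_0$,
\[
0\;\le\;g(v)\;\le\;\|p(A)v\|^2\;\longrightarrow\;\|p(A)v_0\|^2\;=\;0,
\]
so $g$ is continuous at $v_0$.

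For the measure claim, the above analysis shows that $v\in\Gamma$ if and only if $K(v)$ has rank strictly less than $k$, equivalently if and only if $P(v)\equiv\det G(v)=0$. By the Cauchy--Binet formula $P$ is a polynomial in the entries of $v$. The assumption $k<d(A)$ guarantees a vector $v^*$ with $d(A,v^*)=d(A)>k$, so $v^*\in\Omega$ and $P(v^*)\neq 0$; hence $P\not\equiv 0$, and its zero set $\Gamma$ has Lebesgue measure zero in $\mathbb{R}^n$. The only genuine subtlety is the continuity step at points of $\Gamma$, where a priori the minimum of a parametrized family could drop discontinuously; the argument sidesteps this by constructing at each $v_0\in\Gamma$ an explicit element of $\pi_k$ whose norm dominates $g$ and vanishes at $v_0$, and this construction is precisely where the nonsingularity of $A$ is essential.
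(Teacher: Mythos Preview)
Your argument is correct in all three parts. The closed-form expression for $g$ on $\Omega$ via the normal equations gives $C^\infty$-smoothness there; the explicit annihilating polynomial $p\in\pi_k$ at each $v_0\in\Gamma$ (with the nonsingularity of $A$ used to guarantee $\alpha_0\neq 0$ so that normalization into $\pi_k$ is possible) yields the sandwich $0\le g(v)\le\|p(A)v\|^2\to 0$ and hence continuity across $\Gamma$; and the characterization $\Gamma=\{v:\det(K(v)^TK(v))=0\}$ as the zero set of a nonzero polynomial gives the measure-zero claim, with $k<d(A)$ supplying the witness $v^*\in\Omega$ to rule out $P\equiv 0$.

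The paper itself does not prove this lemma: it merely records it as a special case of \cite[Proposition~2.2]{FaJoKnMa1996} for real data and nonsingular $A$. Your proof is therefore not a comparison target so much as a self-contained substitute for the citation. The approach you take---explicit least-squares formula on $\Omega$, pointwise dominating polynomial on $\Gamma$, and the polynomial-zero-set argument for the measure statement---is the natural direct route and is essentially what one would find if one unfolded the cited result in the present (simpler) setting.
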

\medskip{}

We next characterize the minimizer of the function $f(c,v)$ as a
function of $v$.

\medskip{}

\begin{lemma}
For each given $v\in\Omega$, the problem
\[
\min_{c\in\mathbb{R}^{k}}f(c,v)
\]
 has the unique minimizer
\[
\gamma(v)=(K(v)^{T}K(v))^{-1}K(v)^{T}v\in\mathbb{R}^{k}.
\]
 As a function of $v\in\Omega$, this minimizer satisfies $\gamma(v)\in C^{\infty}(\Omega)$.
Given $v\in\Omega$, $(\gamma(v),v)$ is the only point in $\mathbb{R}^{k}\times\Omega$
with
\[
\nabla_c f(\gamma(v),v)=0.
\]
\end{lemma}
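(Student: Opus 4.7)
My plan is to exploit the fact that $f(c,v)$, viewed as a function of $c$ with $v$ fixed, is a quadratic form whose Hessian is $2K(v)^{T}K(v)$, and to reduce everything to the observation that this Hessian is positive definite precisely when $v\in\Omega$.

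First I would establish the full column rank of $K(v)$ on $\Omega$. By definition, $v\in\Omega$ means $d(A,v)\geq k$, so the vectors $v,Av,\dots,A^{k-1}v$ are linearly independent. Since $A$ is nonsingular, multiplication by $A$ preserves linear independence, hence $Av,A^{2}v,\dots,A^{k}v$ are linearly independent as well. Therefore $K(v)\in\mathbb{R}^{n\times k}$ has rank $k$, and $K(v)^{T}K(v)\in\mathbb{R}^{k\times k}$ is symmetric positive definite (and in particular invertible).

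Next, starting from the expansion \eqref{eq:funkcional}, a direct differentiation in $c$ gives
\[
\nabla_{c}f(c,v)\;=\;-2K(v)^{T}v+2K(v)^{T}K(v)c,
\]
so that $\nabla_{c}f(c,v)=0$ if and only if $K(v)^{T}K(v)c=K(v)^{T}v$. By the previous step this linear system has the unique solution $c=\gamma(v)=(K(v)^{T}K(v))^{-1}K(v)^{T}v$. Since the Hessian $2K(v)^{T}K(v)$ is positive definite, $f(\cdot,v)$ is strictly convex, so $\gamma(v)$ is the unique global minimizer; this also proves that $(\gamma(v),v)$ is the only zero of $\nabla_{c}f$ in $\mathbb{R}^{k}\times\Omega$.

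Finally, for the smoothness claim, each entry of $K(v)$ is a linear function of the components of $v$, so $v\mapsto K(v)$ is $C^{\infty}$ on $\mathbb{R}^{n}$, and consequently so are $v\mapsto K(v)^{T}K(v)$ and $v\mapsto K(v)^{T}v$. Matrix inversion is a $C^{\infty}$ map on the open set of invertible matrices, and by the first step $K(v)^{T}K(v)$ lies in this set for every $v\in\Omega$. Composing, $\gamma(v)\in C^{\infty}(\Omega)$. I do not foresee a real obstacle here; the only subtlety worth stating explicitly is the use of the nonsingularity of $A$ to pass from linear independence of $\{A^{j}v\}_{j=0}^{k-1}$ to that of $\{A^{j}v\}_{j=1}^{k}$, which is what makes the definition of $\Omega$ (through $d(A,v)\geq k$) suffice to guarantee that $K(v)$ has full column rank.
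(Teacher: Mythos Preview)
Your proof is correct and follows essentially the same approach as the paper: both argue that for $v\in\Omega$ the matrix $K(v)^{T}K(v)$ is symmetric positive definite, so the quadratic $f(\cdot,v)$ has the unique stationary point $\gamma(v)=(K(v)^{T}K(v))^{-1}K(v)^{T}v$, and then observe that $\gamma$ is a rational (hence $C^{\infty}$) function on $\Omega$. You spell out a few details the paper leaves implicit---the explicit gradient, the passage from independence of $\{A^{j}v\}_{j=0}^{k-1}$ to that of $\{A^{j}v\}_{j=1}^{k}$ via nonsingularity of $A$, and the smoothness argument via composition with matrix inversion---but the underlying argument is the same.
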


\begin{proof}
Since $v\in\Omega$ and $A$ is nonsingular, the vectors $Av,A^{2}v,\dots A^{k}v$
are linearly independent and $K(v)^{T}K(v)$ is symmetric and positive
definite. Therefore, if $v\in\Omega$ is fixed, \eqref{eq:funkcional}
is a quadratic functional in~$c$, which attains its unique global
minimum at the stationary point
\[
\gamma(v)=(K(v)^{T}K(v))^{-1}K(v)^{T}v.
\]
 The function $\gamma(v)$ is a well defined rational function of
$v\in\Omega$, and thus $\gamma(v)\in C^{\infty}(\Omega)$. Note that
the vector $\gamma(v)$ contains the coefficients of the $k$th GMRES
polynomial that corresponds to the initial vector $v\in\Omega$.
\end{proof}

\medskip{}

As stated in Lemma~\ref{lem:cinf}, $g(v)$
is a continuous function on $\mathbb{R}^n$, and thus it is also continuous
on the unit sphere
\[
S\equiv\{u\in\mathbb{R}^{n}:\ \|u\|=1\}.
\]
Since $S$ is a compact set and $g(v)$ is continuous on
this set, it attains its minimum and maximum on $S$.

We are interested in the characterization of points $(\tilde{c},\tilde{v})\in\mathbb{R}^{k}\times S$
such that
\begin{equation}
f(\tilde{c},\tilde{v})=\max_{v\in S}\min_{c\in\mathbb{R}^{k}}f(c,v)=\max_{v\in S}g(v).\label{eq:wcdef}
\end{equation}
 This is the worst-case GMRES problem (\ref{eqn:WCapp}).
Since $g(v)=0$ for all $v\in\Gamma$,
we have
\[
\max_{v\in S}g(v)=\max_{v\in S\cap\Omega}g(v).
\]
 To characterize the points $(\tilde{c},\tilde{v})\in\mathbb{R}^{k}\times S$
that satisfy (\ref{eq:wcdef}), we define for every $c\in{\mathbb{R}}^{k}$
and $v\neq0$ the two functions
\[
F(c,v)\equiv f\left(c,\frac{v}{\|v\|}\right)=\frac{f(c,v)}{v^{T}v},\qquad G(v)\equiv g\left(\frac{v}{\|v\|}\right)=\frac{g(v)}{v^{T}v}.
\]
 Clearly, for any $\alpha\neq0$, we have
\[
F(c,\alpha v)=F(c,v),\qquad G(\alpha v)=G(v).
\]

\begin{lemma}
It holds that $G(v)\in C^{\infty}(\Omega)$. A vector $\tilde{v}\in\Omega\cap S$
satisfies
\[
g(\tilde{v})\geq g(v)\quad\mbox{for all}\quad v\in S
\]
 if and only if $\tilde{v}\in\Omega\cap S$ satisfies
\[
G(\tilde{v})\geq G(v)\quad\mbox{for all}\quad v\in\mathbb{R}^{n}\backslash\{0\}.
\]
\end{lemma}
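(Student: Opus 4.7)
The plan is to prove the two assertions separately. Both reduce to straightforward applications of previously established facts plus the $0$-homogeneity of $G$ already recorded in the excerpt, so I do not anticipate any real obstacle; the content of the lemma is essentially to record that replacing $g$ on the unit sphere by the scale-invariant extension $G$ does not change the maximization problem.

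For the smoothness assertion, I would argue as follows. By Lemma~\ref{lem:cinf} we have $g\in C^\infty(\Omega)$, and $v\mapsto v^Tv$ is a polynomial, hence $C^\infty$ on all of $\mathbb{R}^n$. Since $k\geq 1$, the zero vector has $d(A,0)=0<k$ and therefore lies in $\Gamma$; consequently $0\notin\Omega$ and $v^Tv>0$ for every $v\in\Omega$. Hence $G(v)=g(v)/(v^Tv)$ is a ratio of $C^\infty$ functions with nonvanishing denominator on $\Omega$, which puts it in $C^\infty(\Omega)$.

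For the equivalence, the only input needed is the identity $G(\alpha v)=G(v)$ for $\alpha\neq 0$, noted just before the lemma, which in particular gives $G(v)=g(v)$ whenever $\|v\|=1$. For the forward implication, I would assume $g(\tilde v)\geq g(v)$ for every $v\in S$ and take an arbitrary $v\in\mathbb{R}^n\setminus\{0\}$; then $v/\|v\|\in S$, so
\[
G(v)\;=\;G\!\left(\tfrac{v}{\|v\|}\right)\;=\;g\!\left(\tfrac{v}{\|v\|}\right)\;\leq\; g(\tilde v)\;=\;G(\tilde v).
\]
For the reverse implication, given $v\in S$ one has $G(v)=g(v)$ and $G(\tilde v)=g(\tilde v)$, so the assumed inequality on $G$ immediately yields $g(\tilde v)\geq g(v)$.

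The only thing worth double-checking is that $\tilde v\in\Omega\cap S$ is consistent on both sides of the equivalence: in the forward direction, replacing $S$ by $\mathbb{R}^n\setminus\{0\}$ as the comparison set never forces us outside $\Omega$ because $G(v)=0$ on $\Gamma\setminus\{0\}$ while $G(\tilde v)=g(\tilde v)>0$, so the inequality is automatic for vectors in $\Gamma\setminus\{0\}$. This observation, together with the two one-line computations above, completes the argument.
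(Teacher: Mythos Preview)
Your proposal is correct and follows essentially the same route as the paper's own proof, which is extremely terse: it merely notes that $g\in C^\infty(\Omega)$ and $0\notin\Omega$ give $G\in C^\infty(\Omega)$, and that the equivalence is ``obvious'' from the scale invariance of $G$. Your version simply spells out the details (including the justification that $0\in\Gamma$ and the explicit forward/backward implications); the closing remark about vectors in $\Gamma\setminus\{0\}$ is harmless but unnecessary, since your forward-implication chain already covers all $v\in\mathbb{R}^n\setminus\{0\}$ via the hypothesis on $g$ restricted to $S$.
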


\begin{proof}
Since $g(v)\in C^{\infty}(\Omega)$ and $0\notin\Omega$, it holds
also $G(v)\in C^{\infty}(\Omega)$. If $\tilde{v}\in\Omega\cap S$
is a maximum of $G(v)$, then $\alpha\tilde{v}$ is a maximum as well,
so the equivalence is obvious.
\end{proof}
\medskip

\begin{theorem}
\label{thm:mainwc} The vectors $\tilde{c}\in\mathbb{R}^{k}$ and
$\tilde{v}\in S\cap\Omega$ that solve the problem
\[
\max_{v\in S}\min_{c\in\mathbb{R}^{n}}f(c,v)
\]
 satisfy
\begin{equation}
\nabla_c F(\tilde{c},\tilde{v})=0,\qquad
\nabla_v F (\tilde{c},\tilde{v})=0,\label{eq:stationary}
\end{equation}
 i.e., $(\tilde{c},\tilde{v})$ is a stationary point of the function
$F(c,v)$.
\end{theorem}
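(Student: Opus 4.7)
The plan is to exploit the previous lemma, which converts the constrained maximization of $g$ on $S$ into an unconstrained maximization of $G$ on the open set $\Omega$, so that standard first-order conditions apply without Lagrange multipliers.

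First I would show $\nabla_c F(\tilde{c},\tilde{v})=0$. By definition, $\tilde{c}$ is the minimizer of $c\mapsto f(c,\tilde v)$, and since $\tilde v\in\Omega$ the preceding lemma on the minimizer $\gamma(v)$ gives $\tilde c=\gamma(\tilde v)$ and $\nabla_c f(\tilde c,\tilde v)=0$. Because $F(c,v)=f(c,v)/(v^Tv)$ depends on $v$ only through the normalization factor, $\nabla_c F(c,v)=\nabla_c f(c,v)/(v^Tv)$, and since $\|\tilde v\|=1$ we conclude $\nabla_c F(\tilde c,\tilde v)=0$.

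The substantive step is $\nabla_v F(\tilde c,\tilde v)=0$. By the previous lemma, $\tilde v$ is a maximizer of $G$ not just on the sphere but on all of $\mathbb{R}^n\setminus\{0\}$, an open set on which (because $\tilde v\in\Omega$ and $\Omega$ is open) $G$ is $C^\infty$ in a neighborhood of $\tilde v$. Hence the unconstrained first-order condition gives $\nabla_v G(\tilde v)=0$. Now write $G(v)=F(\gamma(v),v)$ on $\Omega$; by the chain rule,
\begin{equation*}
\nabla_v G(v)=\nabla_v F(\gamma(v),v)+[J\gamma(v)]^T\nabla_c F(\gamma(v),v),
\end{equation*}
where $J\gamma(v)$ is the Jacobian of $\gamma$. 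Evaluating at $v=\tilde v$ and using $\gamma(\tilde v)=\tilde c$ together with $\nabla_c F(\tilde c,\tilde v)=0$ from the first step, the second term vanishes and we obtain $\nabla_v F(\tilde c,\tilde v)=\nabla_v G(\tilde v)=0$.

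The main obstacle I expect is the bookkeeping around the normalization. One must be careful that the maximizer $\tilde v$ on the sphere really is an unconstrained stationary point of $G$; without the preceding lemma one would have to introduce a Lagrange multiplier, and the conclusion $\nabla_v F(\tilde c,\tilde v)=0$ (rather than $\nabla_v F(\tilde c,\tilde v)\parallel\tilde v$) would fail. The homogeneity $G(\alpha v)=G(v)$, together with $\tilde v\in\Omega$, is precisely what makes the clean statement \eqref{eq:stationary} available, and this is the only delicate point; the remaining steps are the standard envelope/chain-rule argument.
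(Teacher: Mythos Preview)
Your proof is correct and follows essentially the same approach as the paper: use the preceding lemma to replace the constrained sphere maximization by the unconstrained maximization of $G$ on $\Omega$, obtain $\nabla G(\tilde v)=0$, and then apply the chain rule to $G(v)=F(\gamma(v),v)$ together with $\nabla_c F(\tilde c,\tilde v)=0$ to conclude $\nabla_v F(\tilde c,\tilde v)=0$. (One small wording slip: $F(c,v)$ certainly depends on $v$ through $f$ as well; what you use, correctly, is that the factor $1/(v^Tv)$ is independent of $c$, so $\nabla_c F=\nabla_c f/(v^Tv)$.)
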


\medskip{}

\begin{proof}
Obviously, for any $v\in\Omega$,
\[
F(\gamma(v),v)=\frac{f(\gamma(v),v)}{v^{T}v}\leq\frac{f(c,v)}{v^{T}v}=F(c,v)
\quad\mbox{for all $c\in\mathbb{R}^{k}$,}
\]
 i.e., $\gamma(v)$ also minimizes the function $F(c,v)$ and
that
\[
\nabla_c F(\gamma(v),v)=0,\qquad v\in\Omega.
\]
 We know that $g(v)$ attains its maximum on $S$ at some point $\tilde{v}\in\Omega\cap S$.
Therefore, $G(v)$ attains its maximum also at $\tilde{v}$. Since
$G(v)\in C^{\infty}(\Omega)$, it has to hold that
\[
\nabla G (\tilde{v})=0.
\]
Denoting $\tilde{c}=\gamma(\tilde{v})$ and writing the function $G(v)$ as $G(v)=F(\gamma(v),v)$ we get
\begin{equation}\label{eqn:nabla}
\nabla G (\tilde{v})=0= \nabla_v \gamma(\tilde{v}) \nabla_c F(\tilde{c},\tilde{v})
+\nabla_v F(\tilde{c},\tilde{v}),
\end{equation}
where $\nabla_v \gamma(\tilde{v})$ is the $n\times k$ Jacobian matrix of the function $\gamma(v):\mathbb{R}^n \rightarrow \mathbb{R}^k$
at the point $\tilde{v}$. Here we used the standard chain rule for multivariate functions.
 Since $\tilde{v}\in\Omega\cap S$, we know from the previous that
$
\nabla_c F(\tilde{c},\tilde{v})=0,
$
and, therefore, using \eqref{eqn:nabla},
$
\nabla_v F(\tilde{c},\tilde{v})=0.
$
\end{proof}
\medskip{}

\begin{theorem}\label{thm:singular}
If $(\tilde{c},\tilde{v})$ is a solution of the problem \eqref{eq:wcdef},
then $\tilde{v}$ is a right singular vector of the matrix $p(A;\tilde{c})$.
\end{theorem}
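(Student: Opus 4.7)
The plan is to use Theorem~\ref{thm:mainwc} directly: we already know $\nabla_v F(\tilde{c},\tilde{v})=0$, so we just need to interpret this equation correctly. The key observation is that for fixed $c$, the function $v\mapsto F(c,v)=v^Tp(A;c)^Tp(A;c)v/(v^Tv)$ is precisely the Rayleigh quotient of the symmetric positive semidefinite matrix $M(c)\equiv p(A;c)^Tp(A;c)$, and the stationarity of a Rayleigh quotient is equivalent to the eigenvalue equation.

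Concretely, I would first compute
\[
\nabla_v F(c,v)=\frac{2}{v^Tv}\bigl(M(c)\,v-F(c,v)\,v\bigr),
\]
which is a routine quotient-rule calculation. Evaluating this at $(\tilde{c},\tilde{v})$ with $\|\tilde{v}\|=1$ and using $\nabla_v F(\tilde{c},\tilde{v})=0$ from Theorem~\ref{thm:mainwc} yields
\[
p(A;\tilde{c})^Tp(A;\tilde{c})\,\tilde{v}=F(\tilde{c},\tilde{v})\,\tilde{v}=\|p(A;\tilde{c})\tilde{v}\|^2\,\tilde{v}=\Psi_k(A)^2\,\tilde{v}.
\]
Thus $\tilde{v}$ is an eigenvector of $p(A;\tilde{c})^Tp(A;\tilde{c})$ corresponding to the eigenvalue $\Psi_k(A)^2$, which by definition means $\tilde{v}$ is a right singular vector of the matrix $p(A;\tilde{c})$, with corresponding singular value $\Psi_k(A)$.

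There is essentially no obstacle here beyond the gradient computation; the real work was done in establishing Theorem~\ref{thm:mainwc}. The only mild care needed is to note that $v^Tv=1$ at $\tilde{v}$ so that $F(\tilde{c},\tilde{v})=f(\tilde{c},\tilde{v})=\Psi_k(A)^2$, and to observe that the eigenvalue produced is automatically the largest singular value squared, since $\Psi_k(A)^2=\max_{\|v\|=1}\|p(A;\tilde{c})v\|^2\ge\|p(A;\tilde{c})w\|^2$ for every unit vector $w$ (in particular, no other eigenvalue of $M(\tilde{c})$ exceeds $\Psi_k(A)^2$).
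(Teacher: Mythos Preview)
Your core argument is correct and essentially identical to the paper's own proof: invoke Theorem~\ref{thm:mainwc} to get $\nabla_v F(\tilde{c},\tilde{v})=0$, recognize $F(c,\cdot)$ as the Rayleigh quotient of $p(A;c)^Tp(A;c)$, and read off the eigenvalue equation.

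However, your final remark is wrong. You assert that $\Psi_k(A)^2=\max_{\|v\|=1}\|p(A;\tilde{c})v\|^2$, but this is not justified and in general false: what we know is $\Psi_k(A)^2=\|p(A;\tilde{c})\tilde{v}\|^2\le\max_{\|v\|=1}\|p(A;\tilde{c})v\|^2=\|p(A;\tilde{c})\|^2$, and equality here would force $\Psi_k(A)=\varphi_k(A)$ (since $\varphi_k(A)\le\|p(A;\tilde{c})\|$). The paper itself exhibits, in Section~\ref{sec:Toh}, a Toh matrix for which the worst-case initial vector $b$ is the right singular vector of $p_3(A)$ corresponding to the \emph{second} largest singular value, precisely because $\Psi_3(A)<\varphi_3(A)$ there. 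So drop the maximality claim; fortunately the theorem as stated does not require it.
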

\medskip{}

\begin{proof}
Since \emph{$(\tilde{c},\tilde{v})$} solves the problem \eqref{eq:wcdef},
we have
$
0=\nabla_v F(\tilde{c},\tilde{v}).
$
 Writing $F(c,v)$ as a Rayleigh quotient,
\[
F(c,v)=\frac{v^{T}p(A;c)^{T}p(A,c)v}{v^{T}v},
\]
 we ask when $\nabla_v F(c,v)=0$; for more details see \cite[pp. 114--115]{B:La2007}.
By differentiating $F(c,v)$ with respect to $v$ we get
\[
0=\frac{2p(A;c)^{T}p(A,c)v\:\|v\|^{2}-2v^{T}p(A;c)^{T}p(A,c)v\: v}{(v^{T}v)^{2}}
\]
 and the condition $0=\nabla_v F(\tilde{c},\tilde{v})$
is equivalent to
\[
p(A;\tilde{c})^{T}p(A,\tilde{c})\tilde{v}=F(\tilde{c},\tilde{v})\,\tilde{v}.
\]
 In other words, $\tilde{v}$ is a right singular vector of $p(A;\tilde{c})$
and $\sigma=\sqrt{F(\tilde{c},\tilde{v})}$ is the corresponding singular
value.
\end{proof}
\medskip{}

\begin{theorem}\label{thm:maximum}
A point $(\tilde{c},\tilde{v})\in\mathbb{R}^{k}\times S$ that solves
the problem \eqref{eq:wcdef} is a stationary point of $F(c,v)$ in
which the maximal value of $F(c,v)$ is attained.
\end{theorem}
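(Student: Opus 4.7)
The plan is to build on Theorem~\ref{thm:mainwc}: the fact that $(\tilde c,\tilde v)$ is a stationary point of $F$ is already established there via \eqref{eq:stationary}, so the only substantive task is to argue that among all stationary points of $F$ the one corresponding to the worst-case GMRES solution realizes the largest value of $F$.

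First I would pick an arbitrary stationary point $(c^*,v^*)\in\mathbb{R}^{k}\times(\mathbb{R}^n\setminus\{0\})$ of $F$ and exploit $\nabla_c F(c^*,v^*)=0$. Because $F(c,v)=f(c,v)/(v^{T}v)$ and the denominator does not involve $c$, this is equivalent to $\nabla_c f(c^*,v^*)=0$, which by \eqref{eq:funkcional} is the normal-equation system $K(v^*)^{T}K(v^*)\,c^*=K(v^*)^{T}v^*$. Substituting this identity back into \eqref{eq:funkcional} collapses $f(c^*,v^*)$ to the value $g(v^*)=\min_{c}f(c,v^*)$, since $f(\cdot,v^*)$ is a convex quadratic and every one of its critical points realizes the global minimum. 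Consequently $F(c^*,v^*)=g(v^*)/(v^*)^{T}v^*=G(v^*)$.

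Next I would compare with $(\tilde c,\tilde v)$. By construction $F(\tilde c,\tilde v)=g(\tilde v)=G(\tilde v)$, and the lemma relating the maxima of $g$ on $S$ and of $G$ on $\mathbb{R}^n\setminus\{0\}$ asserts that $\tilde v$ maximizes $G$ over all nonzero vectors. Hence $F(c^*,v^*)=G(v^*)\leq G(\tilde v)=F(\tilde c,\tilde v)$, which finishes the argument.

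The main obstacle I anticipate concerns stationary points $(c^*,v^*)$ with $v^*\in\Gamma$, where $K(v^*)$ is rank-deficient and the $c$-minimizer of $f(\cdot,v^*)$ is not unique. This is harmless: if $d(A,v^*)<k$ then the annihilating polynomial of $v^*$ places $v^*$ in the range of $K(v^*)$, so the normal equations remain consistent and every solution still yields $f(c^*,v^*)=g(v^*)=0$; the identity $F(c^*,v^*)=G(v^*)$ and the conclusion $F(c^*,v^*)\leq F(\tilde c,\tilde v)$ are unaffected.
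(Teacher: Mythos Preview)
Your argument is correct and follows essentially the same route as the paper's proof: both invoke Theorem~\ref{thm:mainwc} for stationarity, then use $\nabla_c F(c^*,v^*)=0$ to identify $c^*$ as a (the) minimizer of $f(\cdot,v^*)$, so that $F(c^*,v^*)=G(v^*)\leq G(\tilde v)=F(\tilde c,\tilde v)$ by the worst-case property of~$\tilde v$. Your treatment of the boundary case $v^*\in\Gamma$ is a bit more careful than the paper's, though note that the normal equations for a least-squares problem are always consistent (the quadratic $f(\cdot,v^*)$ is convex and bounded below), so the detour through the annihilating polynomial is not strictly needed.
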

\medskip{}

\begin{proof}
Using Theorem~\ref{thm:mainwc} we know that any solution $(\tilde{c},\tilde{v})\in\mathbb{R}^{k}\times S$
of (\ref{eq:wcdef}) is a stationary point of $F(c,v)$. On the other
hand, if $(\hat{c},\hat{v})\in\mathbb{R}^{k}\times S$ satisfies
\[
\nabla_v F (\hat{c},\hat{v})=0,\qquad\nabla_c F(\hat{c},\hat{v})=0,
\]
 then $p(A;\hat{c})$ is the GMRES polynomial that corresponds to
$\hat{v}$ and
\[
F(\hat{c},\hat{v})=\|p(A;\hat{c})\hat{v}\|^{2}\leq\|p(A;\tilde{c})\tilde{v}\|^{2}=F(\tilde{c},\tilde{v}).
\]
 Hence, $(\tilde{c},\tilde{v})$ is a stationary point of $F(c,v)$
in which the maximal value of $F(c,v)$ is attained.
\end{proof}
\medskip

As a consequence of previous results we can formulate the following corollary.
\medskip

\begin{corollary}
\label{col:pAA}
Let $A\in\mathbb{R}^{n\times n}$ be a nonsingular
matrix and let $1\leq k\leq d(A)-1$. Let $b$ be a $k$th unit norm
worst-case GMRES initial vector and let $p_k\in\pi_k$ be the
corresponding $k$th worst-case GMRES polynomial.
Then $p_k$ is also the $k$th worst-case GMRES polynomial for $A^T$ and the initial vector
$r_k/\|r_k\|$.
\end{corollary}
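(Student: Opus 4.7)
The plan is to prove the corollary in two steps: first verify that $r_k/\|r_k\|$ is indeed a $k$th worst-case GMRES initial vector for $A^T$, and then identify the corresponding worst-case GMRES polynomial with $p_k$. The first step is essentially already in hand: by Theorem~\ref{thm:cross} the GMRES residual $s_k$ for $A^T$ starting at $r_k/\|r_k\|$ has norm $\Psi_k(A)$, and by Theorem~\ref{thm:01} we have $\Psi_k(A)=\Psi_k(A^T)$, so $r_k/\|r_k\|$ attains the maximum defining $\Psi_k(A^T)$. Let $q_k\in\pi_k$ be its corresponding GMRES polynomial.

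For the second step, I would exploit two independent eigenvalue relations for $b$. From Theorem~\ref{thm:singular} applied to $A$, the worst-case pair $(\tilde{c},\tilde{v})$ corresponding to $b$ shows that $b$ is a right singular vector of $p_k(A)$ with singular value $\Psi_k(A)$, which rewrites as
\[
p_k(A)^{T}r_k \;=\; \Psi_k^{2}(A)\,b .
\]
On the other hand, equation~\eqref{eqn:mmm} derived after Theorem~\ref{thm:cross} gives
\[
q_k(A^{T})r_k \;=\; \Psi_k^{2}(A)\,b .
\]

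Subtracting these two identities yields $(q_k-p_k)(A^T)\,r_k = 0$. Since $p_k,q_k\in\pi_k$, the difference $q_k-p_k$ has zero constant term, so it factors as $q_k(z)-p_k(z)=z\,h(z)$ for some polynomial $h$ with $\deg h\leq k-1$. Because $A^T$ is invertible, $A^T h(A^T) r_k =0$ implies $h(A^T)r_k=0$. The lemma following Definition~\ref{def:cross} ensures $d(A^T,r_k)>k$ (noting that $d(A,b)>k$ since $r_k\neq 0$), so the minimal polynomial of $r_k$ with respect to $A^T$ has degree greater than $k$, forcing $h\equiv 0$. Hence $q_k=p_k$.

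The main obstacle is really just lining up the two eigenvalue identities for $b$, one coming from the optimization viewpoint (Theorem~\ref{thm:singular}) and one from the cross-equality viewpoint (equation~\eqref{eqn:mmm}); once both are available, the polynomial degree argument using the lemma after Definition~\ref{def:cross} delivers the uniqueness-type conclusion quickly. The invocation of Theorem~\ref{thm:01} at the very beginning is crucial to even make sense of ``worst-case initial vector for $A^T$'' in this context.
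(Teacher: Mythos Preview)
Your proof is correct and follows essentially the same route as the paper: both combine the singular-vector identity $p_k(A^T)r_k=\Psi_k^2(A)b$ from Theorem~\ref{thm:singular} with the cross-equality identity $q_k(A^T)r_k=\Psi_k^2(A)b$ from~\eqref{eqn:mmm}, and then conclude $p_k=q_k$. The only minor difference is in the last step: the paper appeals directly to ``uniqueness of GMRES polynomials'' (i.e., since both $p_k$ and $q_k$ produce the same residual vector for $A^T$ and $r_k/\|r_k\|$, and the GMRES polynomial is unique when the residual is nonzero, they must coincide), whereas you unpack this by subtracting, factoring out $z$, and invoking $d(A^T,r_k)>k$ from the lemma after Definition~\ref{def:cross}. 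Your version is more explicit but amounts to reproving that uniqueness statement in place; the paper's shortcut and your degree argument are equivalent.
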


\medskip
\begin{proof} Using Theorem~\ref{thm:singular} and Theorem~\ref{thm:maximum} we know that
\begin{equation}\label{eqn:pp1}
	\Psi_k^2(A) b = p_k(A^T) p_k(A) b,
\end{equation}
i.e., that $b$ is a right singular vector of the GMRES residual matrix $p_k(A)$ that corresponds
to the maximal value of $F(\tilde{c},\tilde{v})$, i.e., to $\Psi_k^2(A)$. From \eqref{eqn:mmm} we also
know that
\begin{equation}\label{eqn:pp2}
\Psi_{k}^{2}(A)\, b=q_{k}(A^{T})p_{k}(A)\, b
\end{equation}
where $q_k$ is the GMRES polynomial that corresponds to $A^T$ and the initial vector $r_k$.
Comparing \eqref{eqn:pp1} and \eqref{eqn:pp2},
and using the uniqueness of GMRES polynomials it follows that $p_k=q_k$.
\end{proof}
\medskip

\section{Non-uniqueness of worst-case GMRES polynomials}\label{sec:Toh}

In this section we prove that a worst-case GMRES polynomial may not
be uniquely determined, and we give a numerical example for the occurrence
of a non-unique case. Our results are based on Toh's parameterized family of
(nonsingular) matrices
\begin{equation}
A=A(\omega,\varepsilon)=\left[\begin{array}{cccc}
1 & \varepsilon &  & \\
 & -1 & \frac{\omega}{\varepsilon} & \\
 &  & 1 & \varepsilon\\
 &  &  & -1
\end{array}\right]\in\mathbb{R}^{4\times 4},\qquad 0<\omega < 2,\quad0<\varepsilon.\label{eq:Toh}
\end{equation}
Toh used these matrices in~\cite{To1997} to show that $\Psi_3(A)/\varphi_3(A)\rightarrow 0$
for $\epsilon\rightarrow 0$ and each $\omega\in (0,2)$~\cite[Theorem~2.3]{To1997}. In other
words, he proved that the ratio of the worst-case and ideal GMRES approximations can be
arbitrarily small.

\medskip
\begin{theorem}\label{thm:Toh}
If $p_{k}(z)$ is a $k$th worst-case GMRES polynomial of $A$ in $\eqref{eq:Toh}$,
then $p_{k}(-z)$ is also a $k$th worst-case GMRES polynomial of $A$.

In particular, $p_3(z)\neq p_3(-z)$, so the third worst-case GMRES polynomial
of $A$ is not uniquely determined.
\end{theorem}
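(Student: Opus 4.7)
The first assertion rests on a structural symmetry of Toh's matrix: $A$ is orthogonally similar to $-A^{T}$. I would take $P$ to be the $4\times 4$ anti-diagonal permutation matrix and set $D=\mathrm{diag}(1,-1,1,-1)$, so that $S\equiv DP$ is orthogonal. A direct entry-by-entry computation then verifies $SAS^{-1}=-A^{T}$; this single identity does essentially all of the work.

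Given this, let $b$ be a unit-norm $k$th worst-case initial vector for $A$ with worst-case polynomial $p_{k}$, and set $\tilde v\equiv Sb$, still a unit vector. Orthogonal similarity preserves GMRES residual norms: for every $p\in\pi_{k}$,
$$
\|p(-A^{T})\tilde v\|=\|Sp(A)S^{-1}Sb\|=\|p(A)b\|,
$$
so $\tilde v$ is a worst-case initial vector for $-A^{T}$ with worst-case polynomial $p_{k}$. Next, the map $p\mapsto\tilde p$ with $\tilde p(z)\equiv p(-z)$ is a bijection of $\pi_{k}$ onto itself satisfying $p(-A^{T})=\tilde p(A^{T})$, so $\tilde v$ is also a worst-case initial vector for $A^{T}$ with worst-case polynomial $q_{k}(z)\equiv p_{k}(-z)$. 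Finally, Corollary~\ref{col:pAA}, applied with the roles of $A$ and $A^{T}$ interchanged (valid because $(A^{T})^{T}=A$ and because, by Theorem~\ref{thm:01}, both matrices have the same worst-case value), yields that $q_{k}$ is a worst-case polynomial for $A$. This proves the first assertion.

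For the second assertion, any $p\in\pi_{3}$ satisfying $p(z)=p(-z)$ must be of the form $p(z)=1+\alpha z^{2}$ for some $\alpha\in\mathbb{R}$. The plan is to rule out that such a polynomial can be a $3$rd worst-case GMRES polynomial for $A(\omega,\varepsilon)$. Writing a candidate residual as $r_{3}=(I+\alpha A^{2})b$, the orthogonality conditions $\langle r_{3},A^{j}b\rangle=0$ for $j=1,2,3$ form an overdetermined system in the single unknown $\alpha$: the $j=2$ equation pins down $\alpha=-\langle b,A^{2}b\rangle/\|A^{2}b\|^{2}$, after which the $j=1$ and $j=3$ conditions must be satisfied simultaneously by any candidate worst-case vector $b$. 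I would argue, exploiting the bidiagonal-plus-coupling sparsity of $A$ and, if necessary, combining with the stationarity conditions from Theorem~\ref{thm:mainwc}, that these additional constraints cannot be met at a worst-case $b\in S$. A more pragmatic route, which the paper's subsequent discussion hints at, is to compute a worst-case polynomial numerically for specific values of $(\omega,\varepsilon)$ and verify that its linear or cubic coefficient is nonzero; the first assertion then immediately supplies a second, distinct worst-case polynomial $p_{3}(-z)$.

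The main obstacle is the second assertion: ruling out the purely even form $1+\alpha z^{2}$ uniformly in $\omega$ and $\varepsilon$ appears delicate and is likely where the numerical example of the section enters. The first assertion is, by contrast, a clean three-step unwinding of the orthogonal similarity $SAS^{-1}=-A^{T}$ through Theorem~\ref{thm:01} and Corollary~\ref{col:pAA}.
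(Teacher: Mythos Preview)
Your treatment of the first assertion is essentially identical to the paper's: the orthogonal similarity $A=-QA^{T}Q^{T}$ (your $S$ is the paper's $Q$) transports a worst-case pair for $A$ to one for $A^{T}$ with polynomial $p_{k}(-z)$, and Corollary~\ref{col:pAA} brings it back to $A$. That part is fine.

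The second assertion is where your proposal has a genuine gap. You correctly reduce to excluding the purely even form $p_{3}(z)=1-\beta z^{2}$, but your plan --- imposing the three orthogonality conditions $\langle r_{3},A^{j}b\rangle=0$ and hoping the resulting overdetermined system is inconsistent at a worst-case $b$ --- is not carried out, and there is no reason to expect it to succeed without substantial additional structure. Your fallback to numerical verification for specific $(\omega,\varepsilon)$ would not prove the theorem as stated (which covers all $0<\omega<2$, $\varepsilon>0$).

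The paper's route is quite different and uses two ingredients you do not invoke. First, Toh showed that the third \emph{ideal} GMRES polynomial of $A$ is itself even, namely $p_{*}(z)=1+(\alpha-1)z^{2}$; this identifies $\varphi_{3}(A)$ with $\varphi_{1}(B)$ for $B\equiv A^{2}$, and the general fact that ideal and worst-case GMRES coincide at step $k=1$ then gives $\varphi_{3}(A)=\Psi_{1}(B)$. Second, Toh proved the strict inequality $\Psi_{3}(A)<\varphi_{3}(A)$. The paper argues that if a third worst-case polynomial were of the form $1-\beta z^{2}$, one could pass through $B$ at step~$1$ to force $\Psi_{3}(A)=\varphi_{3}(A)$, contradicting Toh. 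The key conceptual move you are missing is this reduction to $B=A^{2}$ together with the appeal to Toh's strict inequality; without it, your argument for $p_{3}(z)\neq p_{3}(-z)$ does not close.
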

\medskip{}

\begin{proof}
Let $b$ be any unit norm $k$th worst-case initial vector of $A$, and consider
the orthogonal similarity transformation
\[
A=-QA^{T}Q^{T},\qquad Q=\left[\begin{array}{cccc}
 &  &  & 1\\
 &  & -1\\
 & 1\\
-1
\end{array}\right].
\]
Then
\[
p_{k}(A)b=Qp_{k}(-A^{T})Q^{T}b\qquad \mbox{and}\quad
\Psi_k(A)=\|p_{k}(A)b\|=\|p_{k}(-A^{T})w\|=\Psi_k(A^T),
\]
where $w=Q^{T}b$. In other words, $p_k(-z)$ is a $k$th worst-case GMRES polynomial
for $A^T$ and, using Corollary~\ref{col:pAA}, it is also a $k$th worst-case GMRES polynomial
for the matrix $A$.

Let $p_{3}(z)\in\pi_3$ be any third worst-case GMRES polynomial for the matrix $A$.
To show that $p_{3}(-z)\neq p_{3}(z)$ it suffices to show that $p_3(z)$
contains odd powers of $z$, i.e., that
\begin{equation}
p_3(z)\neq 1-\beta z^{2}\quad \mbox{for any $\beta\in\mathbb{R}$.}
\end{equation}
Define the matrix
\[
B\equiv\left[\begin{array}{cccc}
1 & 0 & \omega & 0\\
 & 1 & 0 & \omega\\
 &  & 1 & 0\\
 &  &  & 1
\end{array}\right]=A^{2}.
\]
From \cite[Theorem~2.1]{To1997} we know that the
(uniquely determined) third ideal GMRES polynomial of $A$ is of the form
\begin{equation}\label{eqn:Toh3}
p_{*}(z)=1+(\alpha-1)z^{2},\qquad\alpha=\frac{2\omega^{2}}{4+\omega^{2}}.
\end{equation}
Therefore,
\[
\min_{p\in\pi_{3}}\|p(A)\|=
\min_{p\in\pi_{1}}\max_{\|v\|=1}\|p(B)v\|=\max_{\|v\|=1}\min_{p\in\pi_{1}}\|p(B)v\|,
\]
where the last equality follows from the fact that the ideal and worst-case
GMRES approximations are equal for $k=1$~\cite{Jo1994,GrGu1994}. If a third
worst-case polynomial of $A$ is of the form $1-\beta z^{2}$ for some $\beta$, then
\[
\Psi_3(A)=\max_{\|v\|=1}\min_{p\in\pi_{3}}\|p(A)v\|=
\max_{\|v\|=1}\min_{p\in\pi_{1}}\|p(B)v\|=\min_{p\in\pi_{3}}\|p(A)\|=\varphi_3(A).
\]
This, however, contradicts the main result by Toh that
$\Psi_3(A)<\varphi_3(A)$; see~\cite[Theorem 2.2]{To1997}.
\end{proof}
\medskip

To compute examples of worst-case GMRES polynomials for the Toh matrix
$\eqref{eq:Toh}$ numerically we chose $\varepsilon=0.1$ and $\omega=1$,
and we used the function {\tt fminsearch} from Matlab's Optimization Toolbox.
We computed the value
$$
\Psi_3(A) = 0.4579
$$
(we present the numerical results only to 4 digits)
with the corresponding third worst-case initial vector
$$
	b = [-0.6376,
    0.0471,
   0.2188,
   0.7371]^T
$$
and the worst-case GMRES polynomial
\begin{eqnarray*}
p_{3}(z) & = & -0.025z^{3}-0.895z^{2}+0.243z+1=\frac{-1}{39.9}(z-1.181)(z+0.939)(z+35.96).
\end{eqnarray*}
One can numerically check that $b$ is the right singular vector
of $p_{3}(A)$ that corresponds to the second maximal singular
value of $p_{3}(A)$.
From Theorem~\ref{thm:Toh} we know that $q_3(z)\equiv p_3(-z)$ is also a third
worst-case GMRES polynomial. One can now find the corresponding worst-case
initial vector leading to the polynomial $q_3$ using the singular value
decomposition (SVD)
\[
p_{3}(A)=USV^{T},
\]
where the singular values are ordered nonincreasingly on the diagonal of $S$.
We know (by numerical observation) that $b$ is the second column of $V$.
We now compute the SVD of $q_3(A)$, and define the corresponding initial
vector as the right singular vector that corresponds to the second maximal
singular value of $q_3(A)$. It holds that
\[
p_{3}(A^T) = p_{3}(A)^T= V S U^{T}.
\]
Since $A^T = - Q A Q^T$, we get $Qp_{3}(-A)Q^T =  V S U^{T}$, or, equivalently,
\[
q_{3}(A) =  (Q^T V) S (Q^T U)^T.
\]
So, the columns of the matrix $Q^T U$ are right singular vectors of $q_{3}(A)$
and the vector  $Q^T u_2$, where $u_2$ is the second column of $U$,
is the worst-case initial vector that gives the worst-case GMRES
polynomial $q_3(z) = p_3(-z)$.

\section{Ideal versus worst-case GMRES phenomenon}\label{sec:WCvsID}

As mentioned above, Toh~\cite{To1997}
as well as Faber, Joubert, Knill, and Manteuffel~\cite{FaJoKnMa1996}
have shown that worst-case GMRES and ideal GMRES
are different approximation problems in the sense that there exist matrices
$A$ and iteration steps $k$ for which $\Psi_k(A)<\varphi_k(A)$. In this section
we further study these two approximation problems. We start with a geometrical
characterization related to the function $f(c,v)$ from (\ref{eq:funkcional}).
\medskip

\begin{theorem} Let $A\in\mathbb{R}^{n\times n}$ be a nonsingular
matrix and let $1\leq k\leq d(A)-1$.
The $k$th ideal and worst-case GMRES approximations are equal, i.e.,
\begin{equation}
\max_{v\in S}\min_{c\in\mathbb{R}^{k}}f(c,v)\
=\min_{c\in\mathbb{R}^{k}}\max_{v\in S}f(c,v),\label{eqn:wcideal}
\end{equation}
 if and only if $f(c,v)$ has a saddle point in $\mathbb{R}^{k}\times S$.
\end{theorem}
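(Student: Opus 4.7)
The plan is to prove this by the standard saddle-point characterization of the minimax equality, adapted to the setting here. Recall that a point $(\tilde c,\tilde v)\in\mathbb{R}^k\times S$ is a saddle point of $f$ (with the max over $v$ outside and min over $c$ inside) precisely when
\[
f(\tilde c,v)\;\leq\; f(\tilde c,\tilde v)\;\leq\; f(c,\tilde v)\qquad\text{for all }c\in\mathbb{R}^k,\;v\in S,
\]
i.e., $\tilde c$ minimizes $f(\cdot,\tilde v)$ and $\tilde v$ maximizes $f(\tilde c,\cdot)$ on $S$. The key ingredient available for free is the weak duality inequality
\[
\max_{v\in S}\min_{c\in\mathbb{R}^k}f(c,v)\;\leq\;\min_{c\in\mathbb{R}^k}\max_{v\in S}f(c,v),
\]
which is a purely formal consequence of the definitions.

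For the "if" direction, I would assume that a saddle point $(\tilde c,\tilde v)$ exists and chain the two saddle inequalities:
\[
\max_{v\in S}\min_{c\in\mathbb{R}^k}f(c,v)\;\geq\;\min_{c\in\mathbb{R}^k}f(c,\tilde v)\;=\;f(\tilde c,\tilde v)\;=\;\max_{v\in S}f(\tilde c,v)\;\geq\;\min_{c\in\mathbb{R}^k}\max_{v\in S}f(c,v).
\]
Together with weak duality, this forces equality in \eqref{eqn:wcideal}.

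For the "only if" direction, I would pick $\tilde v\in S\cap\Omega$ achieving $\max_{v\in S}g(v)$ (exists by Lemma about continuity of $g$ on the compact set $S$, and lies in $\Omega$ because the common value in \eqref{eqn:wcideal} is positive for $k<d(A)$) and $\tilde c$ achieving $\min_c\max_{v\in S}f(c,v)$ (this is the coefficient vector of the ideal GMRES polynomial, whose uniqueness and existence are recorded in the introduction). Assuming \eqref{eqn:wcideal} holds, I would write the sandwich
\[
f(\tilde c,\tilde v)\;\leq\;\max_{v\in S}f(\tilde c,v)\;=\;\min_{c}\max_{v\in S}f(c,v)\;=\;\max_{v\in S}\min_c f(c,v)\;=\;\min_c f(c,\tilde v)\;\leq\; f(\tilde c,\tilde v),
\]
so equality holds throughout. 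The outer two equalities then yield $\max_{v\in S}f(\tilde c,v)=f(\tilde c,\tilde v)$ and $\min_c f(c,\tilde v)=f(\tilde c,\tilde v)$, which are exactly the two saddle-point inequalities.

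The only mildly delicate point is the existence of the two extremal points $\tilde v$ and $\tilde c$ used in the "only if" direction; both are handled by facts already in the paper (compactness of $S$ together with continuity of $g$ from Lemma~\ref{lem:cinf}, and existence of the ideal GMRES polynomial from \cite{GrTr1994,LiTi2009}). Beyond that, the proof is a clean algebraic chain; there is no real analytic obstacle, since the argument is the standard saddle-point/minimax equivalence specialized to this pair $(c,v)$.
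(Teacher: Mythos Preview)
Your proof is correct and in fact cleaner and more self-contained than the paper's. The paper handles the ``if'' direction by interpreting the two saddle inequalities in GMRES terms---$f(\tilde c,v)\le f(\tilde c,\tilde v)$ says $\tilde v$ is a maximal right singular vector of $p(A;\tilde c)$, and $f(\tilde c,\tilde v)\le f(c,\tilde v)$ says $p(\,\cdot\,;\tilde c)$ is the $k$th GMRES polynomial for $\tilde v$---and then invokes an external result, \cite[Lemma~2.4]{TiLiFa2007}, to conclude \eqref{eqn:wcideal}. The ``only if'' direction in the paper is dispatched in one sentence without detail. By contrast, you run the standard, purely formal saddle-point/minimax equivalence: weak duality plus the two obvious chains, with existence of the extremizers supplied by compactness (Lemma~\ref{lem:cinf}) and the known existence of the ideal GMRES polynomial. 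Your route avoids the external citation and would work verbatim for any $f$ once the inner and outer extrema are attained; the paper's route, while less economical as a proof, has the merit of spelling out what the saddle-point conditions mean in the GMRES context (maximal singular vector plus Galerkin orthogonality), which is the content used in the paragraph following the theorem.
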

\medskip{}

\begin{proof}
If $f(c,v)$ has a saddle point in $\mathbb{R}^{k}\times S$, then
there exist vectors $\tilde{c}\in\mathbb{R}^{k}$ and $\tilde{v}\in S$
such that
\[
f(\tilde{c},v)\leq f(\tilde{c},\tilde{v})\leq f(c,\tilde{v})
\qquad\forall\, c\in\mathbb{R}^{k},\ \forall\, v\in S.
\]
 The condition $f(\tilde{c},v)\leq f(\tilde{c},\tilde{v})$ for all
$v\in S$ implies that $\tilde{v}$ is a maximal right singular vector
of the matrix $p(A;\tilde{c})$. If $f(\tilde{c},\tilde{v})\leq f(c,\tilde{v})$
for all $c\in\mathbb{R}^{k}$, then $p(z;\tilde{c})$ is the GMRES
polynomial that corresponds to the initial vector $\tilde{v}$. In
other words, if $f(c,v)$ has a saddle point in $\mathbb{R}^{k}\times S$,
then there exist a polynomial $p(z;\tilde{c})$ and a unit norm vector
$\tilde{v}$ such that $\tilde{v}$ is a maximal right singular vector
of $p(A;\tilde{c})$ and
\[
p(A;\tilde{c})\tilde{v}\perp A\mathcal{K}_{k}(A,\tilde{v}).
\]
 Using \cite[Lemma~2.4]{TiLiFa2007}, the $k$th ideal and worst-case
GMRES approximations are then equal.

On the other hand, if the condition (\ref{eqn:wcideal}) is satisfied,
then $f(c,v)$ has a saddle point in $\mathbb{R}^{k}\times S$.
\end{proof}
\medskip{}

In other words, the $k$th ideal and worst-case GMRES approximations are equal if
and only if the points $(\tilde{c},\tilde{v})\in\mathbb{R}^{k}\times S$
that solve the worst-case GMRES problem are also the saddle points
of $f(c,v)$ in $\mathbb{R}^{k}\times S$.

\medskip
We next extend the original construction of Toh~\cite{To1997} to obtain
some further numerical examples in which $\Psi_k(A)<\varphi_k(A)$. Note
that the Toh matrix $\eqref{eq:Toh}$ is not diagonalizable. In particular,
for $\omega=1$ we have $A=X \widetilde{J} X^{-1}$, where
$$
\widetilde{J} = \left[
         \begin{array}{cccc}
                 1 & 1 &  &    \\
                  &  1 &  &    \\
               &  & -1 & 1  \\
                &    &  & -1  \\
        \end{array}\right],\qquad
X = \left[
         \begin{array}{cccc}
                 \epsilon & \epsilon & \epsilon & -\epsilon   \\
                  -2 &  -1 & 0 & 1   \\
                   0 & -2\epsilon & 0 & 2\epsilon  \\
                 0 &  4  & 0& 0  \\
        \end{array}\right].
$$
One can ask whether the phenomenon $\Psi_k(A)<\varphi_k(A)$ can appear also
for diagonalizable matrices. The answer is yes, since both $\Psi_k(A)$ and $\varphi_k(A)$
are continuous functions on the open set of nonsingular matrices;
see~\cite[Theorem~2.5 and Theorem~2.6]{FaJoKnMa1996}. Hence one can
slightly perturb the diagonal of the Toh matrix $\eqref{eq:Toh}$
in order to obtain a diagonalizable
matrix $\widetilde{A}$ for which $\Psi_k(\widetilde{A})<\varphi_k(\widetilde{A})$.

For $\omega=1$, the Toh matrix
is an upper bidiagonal matrix with the alternating diagonal entries $1$ and $-1$,
and the alternating superdiagonal entries $\epsilon$ and $\epsilon^{-1}$.
One can consider such a matrix for any $n\geq 4$, i.e.,
$$
A = \left[\begin{array}{cccccc}
1 & \varepsilon\\
 & -1 & \varepsilon^{-1}\\
 &    & 1  & \varepsilon\\
 &  &  & \ddots&\ddots\\
&  &  & &  \ddots&\varepsilon^{\pm 1}\\
 &  &  & & & \pm 1\\
\end{array}\right]\in\mathbb{R}^{n\times n},
$$
and look at the values of $\Psi_k(A)$ and $\varphi_k(A)$. If $n$ is even, we found
numerically that $\Psi_k(A)=\varphi_k(A)$ for $k\neq n-1$ and $\Psi_{n-1}(A)<\varphi_{n-1}(A)$.
If $n$ is odd, then our numerical experiments showed that
$\Psi_k(A)=\varphi_k(A)$ for $k\neq n-2$ and $\Psi_{n-2}(A)<\varphi_{n-2}(A)$. Hence for all such matrices
worst-case and ideal GMRES differ from each other for exactly one $k$.

Inspired by the Toh matrix,
we define the $n\times n$ matrices (for any $n\geq 2$)
\[
J_{\lambda,\varepsilon}\equiv\left[\begin{array}{cccc}
\lambda & \varepsilon\\
 & \ddots & \ddots\\
 &  & \ddots & \varepsilon\\
 &  &  & \lambda
\end{array}\right],\qquad E_{\varepsilon}\equiv\left[\begin{array}{cccc}
0 & 0 & \dots & 0\\
\vdots &  &  & \vdots\\
0 & 0 & \dots & 0\\
\varepsilon^{-1} & 0 & \dots & 0
\end{array}\right]
\]
and use them to construct the matrix
\[
A=\left[\begin{array}{cc}
J_{1,\varepsilon} & \omega E_{\varepsilon}\\
 & J_{-1,\varepsilon}
\end{array}\right]\in\mathbb{R}^{2n\times 2n},\qquad \omega>0.
\]
One can numerically observe that here $\Psi_k(A)<\varphi_k(A)$
for all steps $k=3,\dots, 2n-1$. As an example, we plot in
\figurename~\ref{fig:cross-2} the ideal and worst-case GMRES convergence
curves for $n=4$, i.e., $A$ is an $8\times8$
matrix, $\omega=4$ and $\varepsilon=0.1$.
Varying the parameter~$\omega$ will influence the difference between
worst-case and ideal GMRES in these examples.

\begin{figure}
\begin{center}
\includegraphics[width=6.25cm]{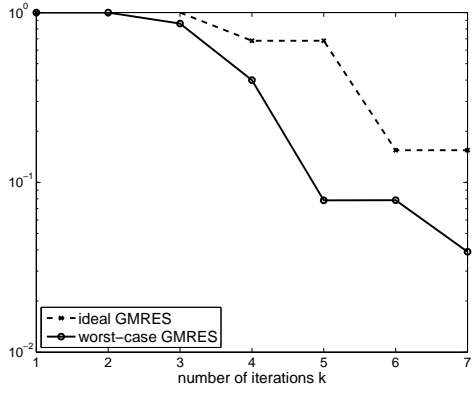}
\end{center}
\caption{Ideal and worst-case GMRES can differ from step 3 up to the step $2n-1$.}
\label{fig:cross-2}
\end{figure}

\medskip
\section{Ideal and worst-case GMRES for complex vectors or polynomials}\label{sec:rc}
We now ask whether the values of the max-min approximation (\ref{eqn:WCapp}) and the
min-max approximation (\ref{eqn:bound02}) for a matrix $A\in\mathbb{R}^{n\times n}$
can change if we allow the maximization over complex vectors and/or the minimization
over complex polynomials. The answer to this question will show that the two approximation
problems indeed are of a different nature.

Let us define
\[
\varphi_{k,\mathbb{K},\mathbb{F}}(A) \equiv\min_{p\in\pi_{k,\mathbb{K}}}
\max_{{b\in\mathbb{F}^{n}\atop \|b\|=1}}\|p(A)b\|,\qquad
\Psi_{k,\mathbb{K},\mathbb{F}}(A) \equiv\max_{{b\in\mathbb{F}^{n}\atop \|b\|=1}}
\min_{p\in\pi_{k,\mathbb{K}}}
\|p(A)b\|,
\]
where $\mathbb{K}$ and $\mathbb{F}$ are either the real or the complex numbers. Hence,
the previously used  $\varphi_k(A)$, $\Psi_k(A)$, and $\pi_k$ are now denoted by
$\varphi_{k,\mathbb{R},\mathbb{R}}(A)$ and $\Psi_{k,\mathbb{R},\mathbb{R}}(A)$,
and $\pi_{k,\mathbb{R}}$, respectively. We first analyze the case of
$\varphi_{k,\mathbb{K},\mathbb{F}}(A)$.\medskip

\begin{theorem}\label{thm:ideal} For a nonsingular matrix $A\in\mathbb{R}^{n\times n}$ and
$1\leq k\leq d(A)-1$,
\[
\varphi_{k,\mathbb{R},\mathbb{R}}(A)=\varphi_{k,\mathbb{C},\mathbb{R}}(A)=
\varphi_{k,\mathbb{R},\mathbb{C}}(A)=\varphi_{k,\mathbb{C},\mathbb{C}}(A).
\]
\end{theorem}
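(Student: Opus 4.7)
My plan is to reduce the four-way equality to a single nontrivial inequality by first collecting the trivial ones that come from set inclusions. Since $\pi_{k,\mathbb{R}}\subset\pi_{k,\mathbb{C}}$ (enlarging the class shrinks the outer minimum) and $\mathbb{R}^n\subset\mathbb{C}^n$ (enlarging the vector class grows the inner maximum), I immediately get the two chains
$$\varphi_{k,\mathbb{C},\mathbb{R}}(A)\;\le\;\varphi_{k,\mathbb{R},\mathbb{R}}(A)\;\le\;\varphi_{k,\mathbb{R},\mathbb{C}}(A)\quad\text{and}\quad \varphi_{k,\mathbb{C},\mathbb{R}}(A)\;\le\;\varphi_{k,\mathbb{C},\mathbb{C}}(A)\;\le\;\varphi_{k,\mathbb{R},\mathbb{C}}(A).$$
So all four quantities sit between $\varphi_{k,\mathbb{C},\mathbb{R}}(A)$ and $\varphi_{k,\mathbb{R},\mathbb{C}}(A)$, and the theorem will follow at once from the single reverse inequality $\varphi_{k,\mathbb{R},\mathbb{C}}(A)\le\varphi_{k,\mathbb{C},\mathbb{R}}(A)$.

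To prove this inequality I would use a real/imaginary-part splitting of complex polynomials. Given $p\in\pi_{k,\mathbb{C}}$, write $p=p_R+ip_I$ with $p_R,p_I\in\mathbb{R}[z]$ of degree at most $k$. The normalization $p(0)=1$ forces $p_R(0)=1$ and $p_I(0)=0$, so $p_R\in\pi_{k,\mathbb{R}}$. The key observation is that because $A$ is real, for any $b\in\mathbb{R}^n$ the vector $p_R(A)b$ is real while $ip_I(A)b$ is purely imaginary; these are orthogonal in the standard Hermitian inner product on $\mathbb{C}^n$, and therefore
$$\|p(A)b\|^2\;=\;\|p_R(A)b\|^2+\|p_I(A)b\|^2\;\ge\;\|p_R(A)b\|^2.$$

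Combining this with the standard identity $\|M\|_{\mathbb{R}^n\to\mathbb{R}^n}=\|M\|_{\mathbb{C}^n\to\mathbb{C}^n}$ for real matrices $M$ (which I would justify in one line from $\|M(u+iv)\|^2=\|Mu\|^2+\|Mv\|^2$ and a convex-combination argument), applied to the real matrix $M=p_R(A)$, I obtain
$$\max_{b\in\mathbb{R}^n,\,\|b\|=1}\|p(A)b\|\;\ge\;\max_{b\in\mathbb{R}^n,\,\|b\|=1}\|p_R(A)b\|\;=\;\max_{b\in\mathbb{C}^n,\,\|b\|=1}\|p_R(A)b\|\;\ge\;\varphi_{k,\mathbb{R},\mathbb{C}}(A).$$
Taking the minimum over $p\in\pi_{k,\mathbb{C}}$ on the left yields $\varphi_{k,\mathbb{C},\mathbb{R}}(A)\ge\varphi_{k,\mathbb{R},\mathbb{C}}(A)$, closing the loop.

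There is no serious obstacle in this argument; its entire content is the joint observation that the side condition $p(0)=1$ is preserved under taking the real part of a polynomial, and that real and purely imaginary vectors are orthogonal in $\mathbb{C}^n$. The only point that deserves an explicit line of verification is the identity $\|M\|_{\mathbb{R}\to\mathbb{R}}=\|M\|_{\mathbb{C}\to\mathbb{C}}$ for real $M$, which is what makes the reality of $A$ indispensable; note that the analogous reduction fails for the worst-case quantity $\Psi_k(A)$, which is precisely why the authors treat that case separately and why the ideal/worst-case dichotomy surfaces again here.
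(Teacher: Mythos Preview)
Your proof is correct and uses the same two core ingredients as the paper's: the identity $\|M\|_{\mathbb{R}^n\to\mathbb{R}^n}=\|M\|_{\mathbb{C}^n\to\mathbb{C}^n}$ for real $M$, and the real/imaginary splitting $p=p_R+ip_I$ together with the Pythagorean relation $\|p(A)b\|^2=\|p_R(A)b\|^2+\|p_I(A)b\|^2$ for real $b$. The difference lies in the organization. The paper proves the equalities pairwise: first $\varphi_{k,\mathbb{R},\mathbb{R}}=\varphi_{k,\mathbb{R},\mathbb{C}}$ from the norm identity, then $\varphi_{k,\mathbb{C},\mathbb{R}}=\varphi_{k,\mathbb{R},\mathbb{R}}$ from the splitting argument, and finally invokes an external reference (Joubert, \emph{Numer.\ Linear Algebra Appl.}, 1994) for $\varphi_{k,\mathbb{R},\mathbb{R}}=\varphi_{k,\mathbb{C},\mathbb{C}}$. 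You instead set up the sandwich $\varphi_{k,\mathbb{C},\mathbb{R}}\le\{\varphi_{k,\mathbb{R},\mathbb{R}},\varphi_{k,\mathbb{C},\mathbb{C}}\}\le\varphi_{k,\mathbb{R},\mathbb{C}}$ from the trivial inclusions and then close it with the single reverse inequality $\varphi_{k,\mathbb{R},\mathbb{C}}\le\varphi_{k,\mathbb{C},\mathbb{R}}$, which you obtain by chaining both ingredients in one line. The upshot is that your argument is entirely self-contained and in particular handles the $(\mathbb{C},\mathbb{C})$ case without the external citation, at no extra cost.
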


\begin{proof} Since
\[
\max_{{b\in\mathbb{R}^{n}\atop \|b\|=1}}\|Bv\|=\|B\|=\max_{{b\in\mathbb{C}^{n}\atop \|b\|=1}}\|Bv\|
\]
holds for any real matrix $B\in\mathbb{R}^{n\times n}$, we have
$
\varphi_{k,\mathbb{R},\mathbb{R}}(A)=\varphi_{k,\mathbb{R},\mathbb{C}}(A).
$

Next, from $\mathbb{R}\subset\mathbb{C}$ we get immediately
$
\varphi_{k,\mathbb{C},\mathbb{R}}(A)\leq\varphi_{k,\mathbb{R},\mathbb{R}}(A).
$
On the other hand, writing $p\in\pi_{k,\mathbb{C}}$ in the form
$p=p_{r}+\mathbf{i}\: p_{i}$,
where $p_{r}\in\pi_{k,\mathbb{R}}$ and $p_{i}$ is a real polynomial
of degree at most $k$ such that $p_i(0)=0$, we get
\begin{eqnarray*}
\varphi_{k,\mathbb{C},\mathbb{R}}^{2}(A)=\min_{p\in\pi_{k,\mathbb{C}}}\max_{{b\in\mathbb{R}^{n}\atop \|b\|=1}}\|p(A)b\|^{2} & = & \min_{p\in\pi_{k,\mathbb{C}}}\max_{{b\in\mathbb{R}^{n}\atop \|b\|=1}}\,\left(\|p_{r}(A)b\|^{2}+\|p_{i}(A)b\|^{2}\right)\\
 & \geq & \min_{p_{r}\in\pi_{k,\mathbb{R}}}\max_{{b\in\mathbb{R}^{n}\atop \|b\|=1}}\|p_{r}(A)b\|^{2}=\varphi_{k,\mathbb{R},\mathbb{R}}^{2}(A),
\end{eqnarray*}
so that $\varphi_{k,\mathbb{C},\mathbb{R}}(A)=\varphi_{k,\mathbb{R},\mathbb{R}}(A).$
Finally, from~\cite[Theorem~3.1]{Jo1994a}
we obtain $\varphi_{k,\mathbb{R},\mathbb{R}}(A)=\varphi_{k,\mathbb{C},\mathbb{C}}(A).$
\end{proof}\medskip

Since the value of $\varphi_{k,\mathbb{K},\mathbb{F}}(A)$ does not change
when choosing for $\mathbb{K}$ and $\mathbb{F}$ real or complex numbers,
we will again use the simple notation  $\varphi_{k}(A)$ in the following text.
The situation for the quantities corresponding to the worst-case GMRES approximation
is more complicated. Our proof of this fact uses the following lemma.\medskip

\begin{lemma}\label{lem:Toh}
If $A=A(\omega,\varepsilon)$ is the Toh matrix defined in $(\ref{eq:Toh})$ and
\begin{equation}\label{eqn:B}
B\equiv\left[\begin{array}{cc}
A & 0\\
0 & A
\end{array}\right],
\end{equation}
then
$\Psi_{3,\mathbb{R},\mathbb{R}}(B)=
\varphi_{3}(A)$.
\end{lemma}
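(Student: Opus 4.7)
The plan is to prove the lemma via two inequalities. For the easy direction, observe that $B = \mathrm{diag}(A,A)$ is block diagonal with identical blocks, so $p(B) = \mathrm{diag}(p(A),p(A))$ for every polynomial $p$, whence $\|p(B)\| = \|p(A)\|$ and therefore $\varphi_3(B) = \varphi_3(A)$. Combined with the general bound $\Psi_k \le \varphi_k$ from \eqref{eqn:bound02}, this yields $\Psi_{3,\mathbb{R},\mathbb{R}}(B) \le \varphi_3(A)$.

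For the reverse inequality I would exhibit a unit vector $\tilde b \in \mathbb{R}^8$ whose $3$rd GMRES residual for $B$ has norm exactly $\varphi_3(A)$. The key structural input is Toh's explicit $3$rd ideal GMRES polynomial of $A$, namely $p_*(z) = 1 + (\alpha-1)z^2$ with $\alpha = 2\omega^2/(4+\omega^2)$, recalled in \eqref{eqn:Toh3}. Since $p_*$ is even, $p_*(A) = I + (\alpha-1)A^2$ depends only on $A^2$, and, as already used in the proof of Theorem~\ref{thm:Toh}, the matrix $A^2$ has nonzero off-diagonal entries only in positions $(1,3)$ and $(2,4)$. Consequently $p_*(A)^T p_*(A)$ decouples into two identical $2 \times 2$ blocks acting on the odd-indexed coordinates $\{1,3\}$ and the even-indexed coordinates $\{2,4\}$, so the maximal singular value of $p_*(A)$ has geometric multiplicity at least~$2$, and two orthogonal maximal right singular vectors can be taken as $u_1 = (a,0,b,0)^T$ and $u_2 = (0,a,0,b)^T$, where $(a,b)^T$ is the unit eigenvector of the repeated $2 \times 2$ block for its larger eigenvalue.

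I would then set
\[
\tilde b = \frac{1}{\sqrt{2}}\left[\begin{array}{c} u_1 \\ u_2 \end{array}\right] \in \mathbb{R}^8,
\]
so that $\|\tilde b\| = 1$ and $\|p_*(B)\tilde b\|^2 = \frac{1}{2}(\|p_*(A)u_1\|^2 + \|p_*(A)u_2\|^2) = \varphi_3(A)^2$. It then remains to verify that $p_*$ is in fact the $3$rd GMRES polynomial of $B$ for $\tilde b$, i.e., $p_*(B)\tilde b \perp B^j \tilde b$ for $j=1,2,3$, equivalently
\[
\langle p_*(A)u_1, A^j u_1\rangle + \langle p_*(A)u_2, A^j u_2\rangle = 0, \qquad j=1,2,3.
\]
I would check these three scalar identities by direct computation of $A^j u_1$ and $A^j u_2$ using the sparsity of $u_1,u_2$. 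The alternating $\pm 1$ diagonal of $A$ makes the two inner products opposite in sign for $j=1$ so that they cancel, whereas for $j=2,3$ each individual inner product vanishes once the defining eigenvector equation of $(a,b)$ is combined with the specific value of $\alpha$.

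Once these three orthogonality conditions are established, $p_*$ is the (unique) $3$rd GMRES polynomial of $B$ for the initial vector $\tilde b$, so $\min_{q\in\pi_3}\|q(B)\tilde b\| = \|p_*(B)\tilde b\| = \varphi_3(A)$, which gives $\Psi_{3,\mathbb{R},\mathbb{R}}(B) \ge \varphi_3(A)$ and finishes the proof. The main obstacle is the $j=2$ identity: this is precisely the step in which the particular value $\alpha = 2\omega^2/(4+\omega^2)$ that singles out $p_*$ as the \emph{ideal} polynomial of $A$ (rather than some other even polynomial in $\pi_3$) has to enter the argument in a non-trivial way.
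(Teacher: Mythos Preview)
Your proposal is correct and follows essentially the same strategy as the paper's proof: both establish the easy inequality via $\varphi_3(B)=\varphi_3(A)$, then construct the worst-case vector for $B$ by stacking two maximal right singular vectors of $p_*(A)$ and verify the three GMRES orthogonality conditions, with cancellation for $j=1$ and individual vanishing for $j=2,3$. The only cosmetic difference is that the paper quotes Toh's explicit singular vectors $v_1,v_2,u_1,u_2$ directly, whereas you derive their odd/even support pattern from the block structure of $p_*(A)^Tp_*(A)$; the resulting computations are the same.
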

\begin{proof}
Using the structure of $B$ it is easy to see that $\Psi_{k,\mathbb{R},\mathbb{R}}(B)\leq \varphi_{k}(A)$
for any~$k$.
To prove the equality, it suffices to find a real unit norm vector
$w$ with
\begin{equation}\label{eqn:BA}
\min_{p\in\pi_{3,\mathbb{R}}}\|p(B)w\|=\varphi_{3}(A)=\min_{p\in\pi_{3,\mathbb{R}}}\|p(A)\|.
\end{equation}
The solution $p_*$ of the ideal GMRES problem on the right hand side of (\ref{eqn:BA}) is
given by (\ref{eqn:Toh3}).
Toh  showed in~\cite[p.~32]{To1997} that $p_*(A)$ has a
twofold maximal singular value $\sigma$, and that the corresponding
right and left singular vectors are given (up to a normalization) by
\[
[v_{1},v_{2}]=\left[\begin{array}{rr}
0 & c\\
c & 0\\
0 & -2\\
-2 & 0
\end{array}\right],\qquad [u_{1},u_{2}]=\left[\begin{array}{rr}
0 & 2\\
2 & 0\\
0 & -c\\
-c & 0
\end{array}\right],
\]
i.e.,
$\sigma u_{1}=p_{*}(A)v_{1}$ and $\sigma u_{2}=p_{*}(A)v_{2},$
where $\sigma=\|p_{*}(A)\|$.

Let us define
\[
w\equiv \left[\begin{array}{c}
v_{1}\\
v_{2}
\end{array}\right]/\left\Vert \left[\begin{array}{c}
v_{1}\\
v_{2}
\end{array}\right]\right\Vert ,\qquad q(z)\equiv p_{*}(z).
\]
Using
\[
q(B)\left[\begin{array}{c}
v_{1}\\
v_{2}
\end{array}\right]=\sigma\left[\begin{array}{c}
u_{1}\\
u_{2}
\end{array}\right]\quad\mbox{and}\quad
\left\Vert \left[\begin{array}{c}
v_{1}\\
v_{2}
\end{array}\right]\right\Vert =\left\Vert \left[\begin{array}{c}
u_{1}\\
u_{2}
\end{array}\right]\right\Vert,
\]
we see that $\left\Vert q(B)w\right\Vert =\sigma.$ To prove (\ref{eqn:BA})
it is sufficient to show that $q$ is the third GMRES polynomial for
$B$ and $w$, i.e., that $q$ satisfies
$q(B)w\perp B^{j}w$ for $j=1,2,3$,
or, equivalently,
\[
\left[\begin{array}{c}
u_{1}\\
u_{2}
\end{array}\right]^{T}\left[\begin{array}{cc}
A^{j} & 0\\
0 & A^{j}
\end{array}\right]\left[\begin{array}{c}
v_{1}\\
v_{2}
\end{array}\right]=u_{1}^{T}A^{j}v_{1}+u_{2}^{T}A^{j}v_{2}=0,
\quad j=1,2,3.
\]
Using linear algebra calculations we get
$u_{1}^{T}Av_{1}=-4c=-u_{2}^{T}Av_{2}$,
and
\[
0=u_{1}^{T}A^{2}v_{1}=u_{2}^{T}A^{2}v_{2}=u_{1}^{T}A^{3}v_{1}=u_{2}^{T}A^{3}v_{2}.
\]
Therefore, we have found a unit norm initial vector $w$ and the corresponding
third GMRES polynomial $q$ such that $\|q(B)w\|=\varphi_{3}(A).$\qquad \end{proof}\medskip

We next analyze the quantities $\Psi_{k,\mathbb{K},\mathbb{F}}(A)$.\medskip

\begin{theorem}\label{thm:wc}
For a nonsingular matrix $A\in\mathbb{R}^{n\times n}$ and $1\leq k\leq d(A)-1$,
\[
\Psi_{k,\mathbb{R},\mathbb{R}}(A)=\Psi_{k,\mathbb{C},\mathbb{R}}(A)\leq \Psi_{k,\mathbb{C},\mathbb{C}}(A)
\leq \Psi_{k,\mathbb{R},\mathbb{C}}(A) \,,
\]
where both inequalities can be strict.
\end{theorem}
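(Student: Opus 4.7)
The plan is to separate the four quantities into easy parts (the equality on the left and the two monotonicity inequalities) and the hard part (strictness), and to reduce the strictness question to a statement about $B=\mathrm{diag}(A,A)$ so that Lemma~\ref{lem:Toh} can be applied.

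For the equality $\Psi_{k,\mathbb{R},\mathbb{R}}(A)=\Psi_{k,\mathbb{C},\mathbb{R}}(A)$, I would write any $p\in\pi_{k,\mathbb{C}}$ as $p=p_r+\mathbf{i}p_i$ with $p_r\in\pi_{k,\mathbb{R}}$ and $p_i$ a real polynomial of degree at most $k$ that vanishes at $0$. For a real unit vector $b$ one then has
\[
\|p(A)b\|^2=\|p_r(A)b\|^2+\|p_i(A)b\|^2\ge\|p_r(A)b\|^2,
\]
so the inner minimum over $\pi_{k,\mathbb{C}}$ coincides with the one over $\pi_{k,\mathbb{R}}$ (attained by $p_i\equiv 0$). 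The two inequalities $\Psi_{k,\mathbb{C},\mathbb{R}}\le\Psi_{k,\mathbb{C},\mathbb{C}}\le\Psi_{k,\mathbb{R},\mathbb{C}}$ are then immediate: $\mathbb{R}^n\subset\mathbb{C}^n$ enlarges the outer max, and $\pi_{k,\mathbb{R}}\subset\pi_{k,\mathbb{C}}$ shrinks the inner min.

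For the strictness, the key tool is the isometric identification $b=b_r+\mathbf{i}b_i\in\mathbb{C}^n\leftrightarrow w=(b_r;b_i)\in\mathbb{R}^{2n}$, under which the action of a \emph{real} polynomial of $A$ becomes the action of the same polynomial of $B=\mathrm{diag}(A,A)$. Since $\|p(A)b\|^2=\|p(A)b_r\|^2+\|p(A)b_i\|^2=\|p(B)w\|^2$ and $\|b\|=\|w\|$, this gives the identity
\[
\Psi_{k,\mathbb{R},\mathbb{C}}(A)=\Psi_{k,\mathbb{R},\mathbb{R}}(B).
\]
Applied to the Toh matrix from (\ref{eq:Toh}) at $k=3$, Lemma~\ref{lem:Toh} yields $\Psi_{3,\mathbb{R},\mathbb{R}}(B)=\varphi_3(A)$, while Toh's result gives $\Psi_{3,\mathbb{R},\mathbb{R}}(A)<\varphi_3(A)$. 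Combining this with the first equality produces the strict chain $\Psi_{3,\mathbb{C},\mathbb{R}}(A)<\Psi_{3,\mathbb{R},\mathbb{C}}(A)$ for the Toh matrix, which already shows that at least one of the two inequalities must be strict.

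To separate the two claims and deliver a strict inequality in each position, one needs to locate $\Psi_{k,\mathbb{C},\mathbb{C}}(A)$ between these two endpoints. I would do this by explicit numerical computation on one or two small matrices: the Toh example should pin down one of the strictness cases, and a small nonnormal real matrix with complex spectrum (for which complex initial vectors genuinely help the outer maximum) should supply the other. The main obstacle is exactly that there is no algebraic mechanism analogous to the decomposition $p=p_r+\mathbf{i}p_i$ which would force $\Psi_{k,\mathbb{C},\mathbb{C}}$ to coincide with either adjacent bound; the underlying problem is a nonconvex max-min over $\mathbb{C}^n$, so strictness in each position must be verified example by example rather than by a general argument.
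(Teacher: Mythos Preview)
Your treatment of the equality and the two monotonicity inequalities is correct and essentially matches the paper's argument (the paper phrases the equality via uniqueness of the real GMRES polynomial, but your $p=p_r+\mathbf{i}p_i$ splitting is the same content). The identification $\Psi_{k,\mathbb{R},\mathbb{C}}(A)=\Psi_{k,\mathbb{R},\mathbb{R}}(B)$ via $B=\mathrm{diag}(A,A)$ is exactly what the paper uses as well.

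The gap is in the strictness argument. You invoke Toh's result only as $\Psi_{3,\mathbb{R},\mathbb{R}}(A)<\varphi_3(A)$, which leaves you unable to separate the two inequalities and forces you to fall back on unspecified numerical computation. But Toh's theorem \cite[Theorem~2.2]{To1997} is stated for \emph{all complex} unit vectors $b\in\mathbb{C}^4$: it gives $\Psi_{3,\mathbb{C},\mathbb{C}}(A)<\varphi_3(A)$ directly. Combined with Lemma~\ref{lem:Toh} and your identity $\Psi_{3,\mathbb{R},\mathbb{C}}(A)=\Psi_{3,\mathbb{R},\mathbb{R}}(B)=\varphi_3(A)$, this already yields the strict second inequality $\Psi_{3,\mathbb{C},\mathbb{C}}(A)<\Psi_{3,\mathbb{R},\mathbb{C}}(A)$ for the Toh matrix with no further work. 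For the strict first inequality the paper does not compute numerically either: it cites \cite[Section~4]{ZaOlEl2003}, where a real matrix is exhibited for which GMRES stagnates completely on some complex unit initial vector but not on any real one, giving $\Psi_{k,\mathbb{C},\mathbb{R}}(A)<1=\Psi_{k,\mathbb{C},\mathbb{C}}(A)$. Your proposal to ``verify example by example'' by numerics is therefore unnecessary; both strictness claims follow from existing analytic results once you use the full strength of Toh's theorem.
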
\medskip
\begin{proof}
For a real initial vector $b$, the corresponding GMRES polynomial is uniquely determined and real.
This implies
$
\Psi_{k,\mathbb{C},\mathbb{R}}(A)=\Psi_{k,\mathbb{R},\mathbb{R}}(A).
$
Next, from~\cite[Theorem~3.1]{Jo1994a} it follows that
$
\Psi_{k,\mathbb{R},\mathbb{R}}(A)\leq\Psi_{k,\mathbb{C},\mathbb{C}}(A).
$
Finally, using $\mathbb{R}\subset\mathbb{C}$ we get
$
\Psi_{k,\mathbb{C},\mathbb{C}}(A)\leq\Psi_{k,\mathbb{R},\mathbb{C}}(A).
$

It remains to show that the inequalities can be strict. For the first inequality,
as shown in~\cite[Section~4]{ZaOlEl2003},
there exist real matrices $A$ and certain complex (unit norm) initial
vectors $b$ for which $\min_{p\in\pi_{k,\mathbb{C}}} \,\|p(A)b\|=1$ for
$k=1,\dots,n-1$ (complete stagnation), while such complete stagnation does
not occur for any real (unit norm) initial vector. Therefore,
there are matrices for which $\Psi_{k,\mathbb{C},\mathbb{R}}(A)< \Psi_{k,\mathbb{C},\mathbb{C}}(A)$.

{To show that the second inequality can be strict,
we note that for any $A\in\mathbb{R}^{n\times n}$, the corresponding
matrix $B\in\mathbb{R}^{2n\times 2n}$
of the form (\ref{eqn:B}), and $1\leq k\leq d(A)-1$,
\begin{eqnarray}
\Psi^2_{k,\mathbb{R},\mathbb{C}}(A)&=&\max_{{b\in\mathbb{C}^{n}\atop \|b\|=1}}\min_{p\in\pi_{k,\mathbb{R}}}\|p(A)b\|^2
=\max_{{u,v\in\mathbb{R}^{n}\atop \|u\|^2+\|v\|^2=1}}\min_{p\in\pi_{k,\mathbb{R}}}\|p(A)(u+\,\mathbf{i}\,v)\|^2\nonumber\\
&=&\max_{{u,v\in\mathbb{R}^{n}\atop \|u\|^2+\|v\|^2=1}}\min_{p\in\pi_{k,\mathbb{R}}}
\,(\|p(A)u\|^2+\|p(A)v\|^2)\nonumber\\
&=&\max_{{v\in\mathbb{R}^{2n}\atop \|v\|=1}}\min_{p\in\pi_{k,\mathbb{R}}}\|p(B)v\|^2\;=\;
\Psi^2_{k,\mathbb{R},\mathbb{R}}(B).\label{eqn:ident}
\end{eqnarray}
Now let $A$ be the Toh matrix $(\ref{eq:Toh})$ and $k=3$. Toh showed in \cite[Theorem~2.2]{To1997}
that for any unit norm $b\in\mathbb{C}^{4}$ and the corresponding third GMRES polynomial
$p_{b}\in\pi_{3,\mathbb{C}}$,
\[
\|p_{b}(A)b\|<\varphi_{3}(A).
\]
Hence $\Psi_{3,\mathbb{C},\mathbb{C}}(A)<\varphi_{3}(A)$. Lemma~\ref{lem:Toh} and
equation (\ref{eqn:ident}) imply
$\varphi_{3}(A)=\Psi_{3,\mathbb{R},\mathbb{C}}(A)$, which completes the proof of the
strict inequality.\qquad}
\end{proof}

\medskip
Our proof concerning the strictness of the first inequality in the previous theorem relied on
a numerical example given in~\cite[Section~4]{ZaOlEl2003}. We will now give an alternative construction
based on the non-uniqueness of the worst-case GMRES polynomial, which will lead to an example with
\[
\Psi_{k,\mathbb{R},\mathbb{R}}(A)<\Psi_{k,\mathbb{R},\mathbb{C}}(A).
\]
Suppose that $A$ is a real matrix for which in a certain step $k$ two {\em different} worst-case
polynomials $p_b\in\pi_{k,\mathbb{R}}$ and $p_c\in\pi_{k,\mathbb{R}}$ with corresponding real
unit norm initial vectors $b$ and $c$ exist, so that
$$\Psi_{k,\mathbb{R},\mathbb{R}}(A)\,=\,\|p_{b}(A)b\|\,=\,\|p_c(A)c\|.$$
Note that since $p_b$ and $p_c$ are the uniquely determined GMRES polynomials that solve the
problem (\ref{eqn:GMRESAPP}) for the corresponding real initial vectors, it holds that
\begin{equation}\label{eqn:strict}
\|p_{b}(A)b\| < \|p(A)b\|,\qquad \|p_{c}(A)c\| < \|p(A)c\|
\end{equation}
for any polynomial $p\in\pi_{k,\mathbb{C}}\setminus \{p_b,p_c\}$.

Writing any complex vector $w\in\mathbb{C}^{n}$ in the form
$w=(\cos\theta)\,u\,+\,\mathbf{i}\,(\sin\theta)\,v$, with $u,v\in\mathbb{R}^{n}$,
$\|u\|=\|v\|=1,$
we get
\begin{eqnarray*}
\Psi^2_{k,\mathbb{R},\mathbb{C}}(A) &=&
\max_{w\in\mathbb{C}^{n}\atop \|w\|=1}\min_{p\in\pi_{k,\mathbb{R}}}\,\|p(A)b\|^2 \\
&=& \max_{\theta\in\mathbb{R},u,v\in\mathbb{R}^{n}\atop \|u\|=\|v\|=1}
\min_{p\in\pi_{k,\mathbb{R}}}\,
\left(\cos^2\theta\,\|p(A)u\|^2+\sin^2\theta\,\|p(A)v\|^2\right)\\
&\geq &
\max_{\theta\in\mathbb{R}}
\min_{p\in\pi_{k,\mathbb{R}}}\,
\left(\cos^2\theta\|p(A)b\|^2+\sin^2\theta\|p(A)c\|^2\right)\\ &>&
(\cos^2\theta)\,\Psi^2_{k,\mathbb{R},\mathbb{R}}(A)\,+\,(\sin^2\theta)\,\Psi^2_{k,\mathbb{R},\mathbb{R}}(A)
=\Psi^2_{k,\mathbb{R},\mathbb{R}}(A),
\end{eqnarray*}
where the strict inequality follows from (\ref{eqn:strict}) and from the fact that $\|p(A)b\|^2$
and $\|p(A)c\|^2$ do not attain their minima for the same polynomial.

To demonstrate the strict inequality
$\Psi_{k,\mathbb{R},\mathbb{R}}(A)<\Psi_{k,\mathbb{R},\mathbb{C}}(A)$
numerically we use the Toh matrix $\eqref{eq:Toh}$ with
$\varepsilon=0.1$ and $\omega=1$, and $k=3$.
Let $b$ and $c$ be the corresponding two
different worst-case initial vectors introduced in Section~\ref{sec:Toh}.
We vary $\theta$ from $0$ to $\pi$ and compute the quantities
\begin{equation}\label{eqn:wccompl}
\min_{p\in\pi_{3,\mathbb{R}}}\,
\left(\cos^2\theta\,\|p(A)b\|^2+\sin^2\theta\,\|p(A)c\|^2\right)
=
\min_{p\in\pi_{3,\mathbb{R}}}\,\|p(B)g_\theta\|^2,
\end{equation}
where
\[
B=\left[\begin{array}{cc}
A & 0\\
0 & A
\end{array}\right]\quad\mbox{and}\quad
g_\theta=\left[\begin{array}{c} (\cos \theta)b\\
(\sin \theta)c
\end{array}\right].
\]
In \figurename~\ref{fig:wcomplex}
we can see clearly, that for $\theta\notin\{0,\,\pi/2\,,\pi\}$ the value
of (\ref{eqn:wccompl}) is strictly larger than $\Psi_3(A)=0.4579$.

\begin{figure}
\begin{center}
\includegraphics[width=6.25cm]{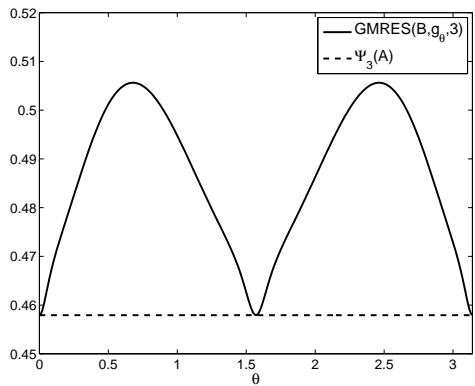}
\caption{The GMRES residual norm for a varying complex right hand side.}
\label{fig:wcomplex}
\end{center}
\end{figure}

\section{Concluding remarks}
We have studied the worst-case GMRES approximation problem, which for each (nonsingular) matrix $A$
and iteration step $k\leq d(A)$ represents the best possible attainable upper bound on the actual
GMRES residual norm for a linear algebraic system with $A$ at step $k$. We have derived several
theoretical properties of the worst-case GMRES problem, and we have studied its relation to the
ideal GMRES approximation problem.

In this paper we did not consider quantitative estimation of the worst-case GMRES value $\Psi_k(A)$,
and we did not study how this value depends on properties of $A$. This is an important problem of
great practical interest, which is largely open. For more details and a survey of the current
state-of-the-art we refer to~\cite[Section~5.7]{LieStrBook12}.

\bibliographystyle{siam}
\bibliography{paper}

\def\polhk#1{\setbox0=\hbox{#1}{\ooalign{\hidewidth
  \lower1.5ex\hbox{`}\hidewidth\crcr\unhbox0}}}
\begin{thebibliography}{10}

\bibitem{FaJoKnMa1996}
{\sc V.~Faber, W.~Joubert, E.~Knill, and T.~Manteuffel}, {\em Minimal residual
  method stronger than polynomial preconditioning}, SIAM J. Matrix Anal. Appl.,
  17 (1996), pp.~707--729.

\bibitem{GreBook97}
{\sc A.~Greenbaum}, {\em Iterative {M}ethods for {S}olving {L}inear {S}ystems},
  vol.~17 of Frontiers in Applied Mathematics, SIAM, Philadelphia, PA, 1997.

\bibitem{GrGu1994}
{\sc A.~Greenbaum and L.~Gurvits}, {\em Max-min properties of matrix factor
  norms}, SIAM J. Sci. Comput., 15 (1994), pp.~348--358.

\bibitem{GrTr1994}
{\sc Anne Greenbaum and Lloyd~N. Trefethen}, {\em G{MRES}/{CR} and
  {A}rnoldi/{L}anczos as matrix approximation problems}, SIAM J. Sci. Comput.,
  15 (1994), pp.~359--368.

\bibitem{Jo1994a}
{\sc Wayne Joubert}, {\em On the convergence behavior of the restarted {GMRES}
  algorithm for solving nonsymmetric linear systems}, Numer. Linear Algebra
  Appl., 1 (1994), pp.~427--447.

\bibitem{Jo1994}
\leavevmode\vrule height 2pt depth -1.6pt width 23pt, {\em A robust
  {GMRES}-based adaptive polynomial preconditioning algorithm for nonsymmetric
  linear systems}, SIAM J. Sci. Comput., 15 (1994), pp.~427--439.

\bibitem{B:La2007}
{\sc Peter~D. Lax}, {\em Linear {A}lgebra and its {A}pplications}, Pure and
  Applied Mathematics (Hoboken), Wiley-Interscience [John Wiley \& Sons],
  Hoboken, NJ, second~ed., 2007.

\bibitem{LieStrBook12}
{\sc J.~Liesen and Z.~Strako\v{s}}, {\em Krylov {S}ubspace {M}ethods.
  {P}rinciples and {A}nalysis}, Oxford University Press, Oxford, 2013.

\bibitem{LiTi2009}
{\sc J{\"o}rg Liesen and Petr Tich{\'y}}, {\em On best approximations of
  polynomials in matrices in the matrix 2-norm}, SIAM J. Matrix Anal. Appl., 31
  (2009), pp.~853--863.

\bibitem{SaaBook03}
{\sc Yousef Saad}, {\em Iterative {M}ethods for {S}parse {L}inear {S}ystems},
  SIAM, Philadelphia, PA, second~ed., 2003.

\bibitem{SaSc86}
{\sc Yousef Saad and Martin~H. Schultz}, {\em G{MRES}: a generalized minimal
  residual algorithm for solving nonsymmetric linear systems}, SIAM J. Sci.
  Statist. Comput., 7 (1986), pp.~856--869.

\bibitem{TiLiFa2007}
{\sc Petr Tich{\'y}, J{\"o}rg Liesen, and Vance Faber}, {\em On worst-case
  {GMRES}, ideal {GMRES}, and the polynomial numerical, hull of a {J}ordan
  block}, Electron. Trans. Numer. Anal., 26 (2007), pp.~453--473.

\bibitem{To1997}
{\sc Kim-Chuan Toh}, {\em G{MRES} vs.\ ideal {GMRES}}, SIAM J. Matrix Anal.
  Appl., 18 (1997), pp.~30--36.

\bibitem{ToTr1998}
{\sc Kim-Chuan Toh and Lloyd~N. Trefethen}, {\em The {C}hebyshev polynomials of
  a matrix}, SIAM J. Matrix Anal. Appl., 20 (1998), pp.~400--419.

\bibitem{Za2001}
{\sc Ilya Zavorin}, {\em Spectral factorization of the {K}rylov matrix and
  convergence of {GMRES}}, Tech. Report CS-TR-4309, Computer Science
  Department, University of Maryland, 2001.

\bibitem{ZaOlEl2003}
{\sc Ilya Zavorin, Dianne~P. O'Leary, and Howard Elman}, {\em Complete
  stagnation of {GMRES}}, Linear Algebra Appl., 367 (2003), pp.~165--183.

\end{thebibliography}

\end{document}